\documentclass[hidelinks, 11 pt,a4paper]{article}
\addtolength{\oddsidemargin}{-10mm}
\addtolength{\evensidemargin}{-10mm}
\addtolength{\textwidth}{2cm} 
\usepackage[utf8]{inputenc}
\usepackage[swedish,english]{babel}
\usepackage{amsmath}
\usepackage{dsfont} 
\usepackage{amsthm}
\usepackage{enumerate}
\usepackage{ mathrsfs } 
\usepackage{amssymb }
\usepackage[all,knot]{xy}
\usepackage{amsthm}
\usepackage{dsfont} 
\usepackage{hyperref} 
\usepackage{enumitem} % enumerate (a), (b), etc 
\usepackage[titletoc,title]{appendix}
 \usepackage{color}

\newcommand{\ot}{\otimes}
\newcommand{\Z}{\mathbb{Z}} 
\newcommand{\Q}{\mathbb{Q}}

\newcommand{\ubar}{U^{Bar}}

\newcommand{\op}{\oplus} 
\DeclareMathOperator{\id}{id}

\DeclareMathOperator{\Hom}{Hom}

\DeclareMathOperator{\charac}{char}
\DeclareMathOperator{\im}{im}
\renewcommand{\dots}{\ldots}
\newcommand{\dgl}{\boldsymbol{\mathrm{DGL}}_\k}
\newcommand{\dga}{\boldsymbol{\mathrm{DGA}}_\k} 
\newcommand{\cdga}{\boldsymbol{\mathrm{CDGA}}_\k}

\newcommand{\h}{\mathcal{H}}

\renewcommand{\k}{\mathds k}
\newcommand{\ainf}{A_\infty\text{-}}
\newcommand{\cinf}{C_\infty\text{-}}
\newcommand{\linf}{L_\infty\text{-}}
\newcommand{\p}{\mathscr P}
\newcommand{\Vpd}{\mathbb V_{\p^!}}

\newcommand{\cL}{\mathcal L}

\newtheorem{thm}{Theorem}[section]
\newtheorem{cor}[thm]{Corollary}
\newtheorem{prop}[thm]{Proposition}

\newtheorem{lemma}[thm]{Lemma} 
\theoremstyle{definition}
\newtheorem{dfn}[thm]{Definition}

\newtheorem{rmk}[thm]{Remark}

\newcommand{\Addresses}{{% additional braces for segregating \footnotesize
  \bigskip
  \footnotesize
  
\textsc{Department of Mathematics, Stockholm University, SE-106 91 Stockholm, Sweden}\par\nopagebreak
  \textit{E-mail address:} \texttt{bashar@math.su.se}
}}

\begin{document}
\title{Non-commutative formality implies commutative and Lie formality} 
\author{Bashar Saleh} 
\date{}

\maketitle

\begin{abstract} 
 Over a field of characteristic zero we prove two formality conditions. We prove that a dg Lie algebra is formal if and only if its universal enveloping algebra is formal.  We also prove that a commutative dg algebra is formal as a dg associative algebra if and only if it is formal as a commutative dg algebra. We present some consequences of these theorems in rational homotopy theory.
\end{abstract}

\section{Introduction} Formality is an important concept in rational homotopy theory (\cite{deligne75}), deformation quantization (\cite{kontsevich03}), deformation theory (\cite{goldman88}), and other branches of mathematics where differential graded homological algebra is used. 
The notion of  formality  exists in many categories, e.g. the category of (commutative) dg associative algebras and the category of dg Lie algebras. An object  $A$ in such a category  is called formal if there exists a zig-zag of  quasi-isomorphisms  connecting $A$ with its cohomology $H(A)$;
\[A\xleftarrow\sim
B_1\xrightarrow\sim \cdots \xleftarrow\sim
B_n\xrightarrow\sim H(A).\] 

A functor between categories in which the notion of formality exists may or may not preserve formal objects. For example, over a field  of characteristic zero, it is known that the universal enveloping algebra functor $U\colon\dgl\to \dga$ preserves formal objects  (\cite[Theorem 21.7]{felixrht}). That means that the formality of a dg Lie algebra (dgl) $L$ implies the formality of $UL$ (as a dg associative algebra (dga)). But what about the reversed relation? Does the formality of $UL$ imply the formality of $L$? In this paper we show that this holds for dg Lie algebras over a
field of characteristic zero.

\begin{thm}\label{mainthmone} A dg Lie algebra  $L$  over a field of characteristic zero is formal  if and only if its universal enveloping algebra $UL$ is formal as a dga.
\end{thm}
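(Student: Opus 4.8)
The \emph{only if} direction is precisely \cite[Theorem 21.7]{felixrht}, so the new content is the reverse implication, and I would concentrate there. The plan is to pass to minimal models and read off formality from the higher operations. First I would record the characteristic-zero input that makes everything tick: the functor $U$ is homotopically well behaved (it preserves quasi-isomorphisms), and by Poincar\'e--Birkhoff--Witt $H(UL)\cong U(HL)$ as graded associative algebras, with $UL$ a conilpotent cocommutative dg Hopf algebra and $L=\mathrm{Prim}(UL)$. Applying the homotopy transfer theorem, I would replace $L$ by a minimal $L_\infty$-model $(HL,\{l_i\}_{i\ge 2})$ and $UL$ by a minimal $A_\infty$-model on $H(UL)=U(HL)$; concretely I would take the latter to be the enveloping $A_\infty$-algebra $U_\infty(HL,\{l_i\})$ produced by the (characteristic-zero) universal enveloping functor for $L_\infty$-algebras, whose underlying complex is $U(HL)$ with zero differential and which is $A_\infty$-quasi-isomorphic to $UL$. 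With these models in hand, $L$ is formal iff $(HL,\{l_i\})$ is $L_\infty$-isomorphic to the strict $(HL,l_2)$, and $UL$ is formal iff $U_\infty(HL,\{l_i\})$ is $A_\infty$-isomorphic to the strict algebra $U(HL)$.

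The crux is then a \emph{reflection} statement: triviality of the transferred $A_\infty$-structure forces triviality of the transferred $L_\infty$-structure. I would organize this through the deformation complexes controlling the two problems. Deformations of the bracket on $\mathfrak g:=HL$ are governed by the Chevalley--Eilenberg complex $C^\bullet_{\mathrm{CE}}(\mathfrak g;\mathfrak g)$, while deformations of the product on $U\mathfrak g$ are governed by the Hochschild complex $C^\bullet_{\mathrm{Hoch}}(U\mathfrak g;U\mathfrak g)$, and the enveloping functor induces a natural morphism between them sending the Maurer--Cartan element $\{l_{\ge 3}\}$ to $\{m_{\ge 3}\}$. Here I would invoke the classical Cartan--Eilenberg computation, valid in characteristic zero: via PBW the adjoint module decomposes as $U\mathfrak g\cong S\mathfrak g=\bigoplus_{n\ge 0}S^n\mathfrak g$, whence
\[
HH^\bullet(U\mathfrak g;U\mathfrak g)\;\cong\;H^\bullet_{\mathrm{CE}}\!\big(\mathfrak g;S\mathfrak g\big)\;=\;\bigoplus_{n\ge 0}H^\bullet_{\mathrm{CE}}(\mathfrak g;S^n\mathfrak g),
\]
and the $n=1$ summand is exactly $H^\bullet_{\mathrm{CE}}(\mathfrak g;\mathfrak g)$, the cohomology controlling the Lie problem. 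Thus the enveloping functor realizes the Lie deformation cohomology as a \emph{natural direct summand} of the associative one. Since formality can be detected by the vanishing of a single natural obstruction class (a Kaledin-type class) in $H^1$ of the respective deformation complex, graded by weight (the arity minus one), naturality places the Lie class inside this summand as a component of the associative class; hence if the associative class vanishes, i.e. $UL$ is formal, then so does the Lie class, and $L$ is formal.

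The main obstacle I anticipate is precisely the passage from this infinitesimal, summand-level picture to honest gauge-triviality, i.e. making the reflection work to all orders rather than first order. Two points need care. First, I must be sure that $U_\infty(HL,\{l_i\})$ really is a \emph{minimal} $A_\infty$-model of $UL$ and that its $A_\infty$-gauge-triviality is genuinely equivalent to $UL$ being formal as a dga: an arbitrary $A_\infty$-isomorphism out of $U\mathfrak g$ need not respect the coalgebra structure, so one cannot naively apply $\mathrm{Prim}$ to descend it, which is exactly why I route everything through deformation cohomology. Second, I must ensure the weight-graded obstruction theory genuinely collapses to the single natural class, so that ``the associative class restricts to the Lie class'' is a statement about complete formality rather than merely its linearization; this is where the positivity of the weight grading (all higher operations have weight $\ge 1$) and the functoriality of the transfer are used. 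Characteristic zero is essential throughout: it underlies the good behaviour of $U$, the PBW decomposition $U\mathfrak g\cong S\mathfrak g$, and the Cartan--Eilenberg isomorphism that produces the summand.
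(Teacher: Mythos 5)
Your setup coincides with the paper's: transferred minimal models, Baranovsky's enveloping construction $\ubar$ (your $U_\infty$) to identify the minimal $\ainf$model of $U\cL$ with $\ubar$ of the minimal $\linf$model, and PBW to realize Chevalley--Eilenberg cohomology as a natural summand of Hochschild cohomology (your Cartan--Eilenberg decomposition is a strengthening of the paper's Proposition \ref{lie-mod-incl}). The genuine gap is at the step you yourself call the crux, and ``naturality'' does not close it. There is no ``natural morphism of deformation complexes sending $\{l_{\geq 3}\}$ to $\{m_{\geq 3}\}$'': the assignment $\{l_i\}\mapsto\{m_i\}$ is nonlinear (each $m_j$ is built from $l_2,\dots,l_j$ by homological perturbation theory), and $\ubar$ is not even a functor, so it induces no map of complexes in that direction. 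The honest linear comparison map, the antisymmetrization $Alt\colon C^*_{Hoch}(UH(\cL))\to C^*_{CE}(H(\cL),(UH(\cL))^{ad})$ of Lemma \ref{alt}, goes the opposite way, has coefficients in the module $(UH(\cL))^{ad}$ rather than $H(\cL)$, and is merely a cochain map; its target is not a dg Lie algebra, so it cannot transport gauge equivalences or a Kaledin class in one shot. Worse, the class you propose to transport is not defined where you place it: $m:=m_3+m_4+\cdots$ is not a cocycle for the untwisted Hochschild differential of $(H(UL),0,m_2)$ (Maurer--Cartan gives $\partial m=-\tfrac{1}{2}[m,m]$, and only the leading term $m_3$ is closed), so ``the associative class'' has no meaning inside the decomposition $HH^*(UH(\cL))\cong\bigoplus_{n\geq 0}H^*_{CE}(H(\cL),S^nH(\cL))$ that you write down. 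A genuine Kaledin class lives in cohomology twisted by the full $\ainf$structure, with formal-parameter coefficients; extending your summand to that twisted setting and proving that the enveloping construction respects the classes there is a substantial theorem (such machinery was only developed in later work on Kaledin classes), not an application of functoriality --- which $\ubar$ does not possess.

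The paper closes exactly this gap by refusing to make any all-orders comparison: it iterates first-order ones, which is all the available comparison map can see. At each stage, $\dga$-formality kills the leading obstruction $[m_k]$ (Proposition \ref{hejhej}(a)); Baranovsky's symmetrization identity (Theorem \ref{ubar}(d)) gives $Alt^*[m_k]=j^*[l_k]$; injectivity of $j^*$ (Proposition \ref{lie-mod-incl}, i.e.\ your PBW summand) forces $[l_k]=0$; Kadeishvili's theory (Proposition \ref{hejhej}(b)) then produces an $\linf$isomorphism onto a structure with $l_k=0$; and since $m_j$ depends only on $l_1,\dots,l_j$ (Theorem \ref{ubar}(c)), applying $\ubar$ again restarts the argument one arity higher, so that Theorem \ref{ObstrThm} yields formality of $\cL$. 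To salvage your one-shot argument you would have to build the weight-graded, twisted Kaledin-class formalism together with its compatibility with $\ubar$ --- a considerable project in its own right --- whereas folding your summand observation into the induction above reproduces the paper's proof.
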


Among the results in the spirit of Theorem \ref{mainthmone}, there is a theorem by Aubry and Lemaire (\cite{aubry88}) saying that two dgl morphisms $f,g\colon L\to L'$ are homotopic if and only if $U(f),U(g)\colon UL\to UL'$ are homotopic. The author do not think that the result by Aubry and Lemaire implies Theorem \ref{mainthmone} or vice versa.

Another remark is about the result by Milnor and Moore (\cite{milnor65}) saying that, over a field of characteristic zero, the universal enveloping algebra defines an equivalence of categories between the category of dg Lie algebras and the category of connected cocommutative dg Hopf algebras. This equivalence together with Theorem \ref{mainthmone} and with the fact that a dgl morphism $f\colon L\to L'$ is a quasi-isomorphism if and only if $U(f)\colon UL\to UL'$ is a quasi-isomorphism (\cite[Theorem 21.7 (ii)]{felixrht}), gives that a connected cocommutative dg Hopf algebra is formal  as a connected cocommutative dg Hopf algebra if and only if it is formal as a dga.

We demonstrate a topological consequence of Theorem \ref{mainthmone}. The rational homotopy type of a simply connected space $X$ is algebraically modeled by Quillen's dg Lie algebra $\lambda(X)$ over the rationals (\cite{quillen69}). The space $X$ is called coformal if $\lambda(X)$ is a formal dgl. It is known that  there exists a zig-zag of quasi-isomorphisms connecting $U \lambda(X)$  to the algebra $C_*(\Omega X,\Q)$ of singular chains on the Moore loop space of $X$ (\cite[Chapter 26]{felixrht}). From  Theorem \ref{mainthmone} the following corollary follows immediately:

\begin{cor} Let $X$ be a simply connected space. Then $X$ is coformal if and only if $C_*(\Omega X;\Q)$ is formal as a dga.
\end{cor}

Our second formality result is concerning the forgetful functor from the category of commutative dgas (cdgas) to the category of dgas. This functor  preserves formality; a cdga which is formal as a cdga is obviously formal as a dga. Again, we ask whether this relation is reversible or not. We will prove that over a field  of characteristic zero the answer is positive.

\begin{thm}\label{mainthmtwo}
Let $A$ be a cdga over a field of characteristic zero. Then $A$ is formal as dga if and only if it is formal as a cdga. 
\end{thm}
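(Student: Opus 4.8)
The forward implication is immediate: a zig-zag of $\cdga$ quasi-isomorphisms is in particular a zig-zag of $\dga$ quasi-isomorphisms, so a $\cdga$ that is formal as a $\cdga$ is formal as a $\dga$. All the content is in the converse, and I work throughout over a field $\k$ of characteristic zero, where $\mathrm{Com}$ and $\mathrm{Ass}$ are Koszul and symmetric-group representations are completely reducible. The plan is to translate formality into the gauge-triviality of a Maurer--Cartan element and to descend such a triviality along an operad map.

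\emph{Step 1 (formality as gauge-triviality).} For a Koszul operad $\p$ and a $\p$-algebra $A$, the homotopy transfer theorem endows $H(A)$ with a $\p_\infty$-structure, encoded as a Maurer--Cartan element of the convolution dg Lie algebra $\Vpd=\Hom(\p^{\ashriek},\mathrm{End}_{H(A)})$, whose subscript records the Koszul dual operad $\p^!$ and whose internal differential vanishes since $H(A)$ carries none. Over $\k$ one has the standard criterion that $A$ is formal as a $\p$-algebra if and only if this Maurer--Cartan element is gauge equivalent to $0$. I would apply this twice: for $\p=\mathrm{Com}$, producing the transferred $\cinf$-structure $\phi\in\mathbb V_{\mathrm{Lie}}$, and for $\p=\mathrm{Ass}$, producing the underlying $\ainf$-structure in $\mathbb V_{\mathrm{Ass}}$, i.e.\ in the shifted Hochschild complex of $H(A)$.

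\emph{Step 2 (comparison).} The surjection of operads $\mathrm{Ass}\to\mathrm{Com}$ induces a morphism of Koszul dual cooperads $\mathrm{Ass}^{\ashriek}\to\mathrm{Com}^{\ashriek}$ and hence, by precomposition, an injective morphism of dg Lie algebras $\iota\colon\mathbb V_{\mathrm{Lie}}\hookrightarrow\mathbb V_{\mathrm{Ass}}$. Compatibility of homotopy transfer with the operad map identifies $\iota(\phi)$ with the transferred $\ainf$-structure, so the hypothesis that $A$ is formal as a $\dga$ says precisely that $\iota(\phi)$ is gauge equivalent to $0$ in $\mathbb V_{\mathrm{Ass}}$; I must deduce that $\phi$ is gauge equivalent to $0$ in $\mathbb V_{\mathrm{Lie}}$. \emph{Step 3 (the descent).} The crux is that $\im\iota$ is a dg Lie retract of $\mathbb V_{\mathrm{Ass}}$. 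In characteristic zero the Eulerian idempotents split the Hochschild complex into its $\lambda$-graded (Hodge) pieces $\mathbb V_{\mathrm{Ass}}=\bigoplus_{i\geq 1}\mathbb V^{(i)}$, with $\mathbb V^{(1)}=\im\iota$ the Harrison part; the Gerstenhaber bracket has weight $-1$ for this grading, so $\mathbb V^{(\geq 2)}=\bigoplus_{i\geq 2}\mathbb V^{(i)}$ is a Lie ideal and the projection $p\colon\mathbb V_{\mathrm{Ass}}\to\mathbb V_{\mathrm{Ass}}/\mathbb V^{(\geq 2)}\cong\mathbb V_{\mathrm{Lie}}$ is a morphism of dg Lie algebras with $p\circ\iota=\id$. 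Since a dg Lie morphism carries gauge equivalences to gauge equivalences, applying $p$ to a gauge path from $\iota(\phi)$ to $0$ yields a gauge path from $p(\iota(\phi))=\phi$ to $0$, so $\phi$ is gauge-trivial and $A$ is formal as a $\cdga$.

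The main obstacle is Step 3, and more precisely the bookkeeping that makes it run: verifying that the classical $\lambda$-decomposition is a decomposition of pro-nilpotent dg Lie algebras in which $\mathbb V^{(\geq 2)}$ is genuinely an ideal, and identifying the weight-$1$ summand with $\im\iota$ and the $\cinf$-structures with the Maurer--Cartan elements living there, all compatibly with homotopy transfer. The characteristic-zero hypothesis enters decisively through the existence of the Eulerian idempotents. This is the same mechanism that underlies Theorem \ref{mainthmone}, with the pair $(\mathrm{Ass},\mathrm{Com})$ playing the role that $(\mathrm{Ass},\mathrm{Lie})$ plays there.
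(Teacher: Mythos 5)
Your Steps 1 and 2 are essentially sound (modulo the slip that the transferred Maurer--Cartan element should be required to be gauge equivalent to $b_2$, not to $0$ --- equivalently, one twists the convolution algebra by $b_2$ first), and it is even true, by naturality of convolution Lie algebras along the cooperad surjection $\mathscr{A}ss^{\ashriek}\twoheadrightarrow\mathscr{C}om^{\ashriek}$, that $\im\iota$ is a dg Lie \emph{subalgebra} of $\mathbb{V}_{\mathrm{Ass}}$. But the claim on which Step 3 rests is false: the Gerstenhaber bracket is \emph{not} homogeneous of weight $-1$ for the Eulerian ($\lambda$-)grading, and $\mathbb{V}^{(\geq 2)}$ is \emph{not} a Lie ideal. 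Here is a counterexample at the cochain level. Take $A=\k[x]$ concentrated in degree $0$ and $f(a,b)=ab'-a'b$. Then $f$ is antisymmetric, so with the signed-shuffle conventions of Section 5 (under which Harrison two-cochains are exactly the symmetric bilinear maps) $f$ has vanishing Harrison component and is of pure weight $2$. Yet
\[\tfrac12[f,f](a,b,c)=f(f(a,b),c)-f(a,f(b,c))=ab'c'-a'b'c+a''bc-abc'',\]
which is nonzero (it takes the value $1$ on $(1,x,x)$) and vanishes on the signed shuffles $\mu_{1,2}(a\ot b\ot c)=a\ot b\ot c-b\ot a\ot c+b\ot c\ot a$ and $\mu_{2,1}(a\ot b\ot c)=a\ot b\ot c-a\ot c\ot b+c\ot a\ot b$, as a direct check shows. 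Thus $[f,f]$ is a nonzero \emph{Harrison} cochain: it has pure weight $1$, not weight $3$ as ``weight $-1$'' would force, and not even weight $\geq 2$. Consequently $[\mathbb{V}^{(2)},\mathbb{V}^{(2)}]\not\subseteq\mathbb{V}^{(\geq 2)}$, and your projection $p$ satisfies $p[f,f]=[f,f]\neq 0=[pf,pf]$, so $p$ is not a morphism of Lie algebras and a gauge path from $\iota(\phi)$ to the trivial structure cannot be pushed forward along it. (The delicate interaction between the Gerstenhaber bracket and the $\lambda$-decomposition is known territory --- cf.\ Bergeron and Wolfgang --- and a Lie subalgebra, which Harrison is, is of no help here: the gauge path you need to project lives in the ambient algebra, not in the subalgebra.)

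This is exactly the difficulty the paper's argument is built to avoid. The paper never asks for a cochain-level dg Lie retraction onto the Harrison part; it uses only Barr's theorem (Proposition~\ref{barr-thm}) that $\iota^*\colon Harr^*(\h)\to HH^*(\h)$ is injective \emph{on cohomology}, and feeds that single input, one obstruction class at a time, into the induction of Proposition~\ref{hejhej} and Theorem~\ref{ObstrThm}: $[m_3]$ dies in Hochschild cohomology, hence in Harrison cohomology by injectivity, hence $m_3$ can be removed by a $\cinf$isomorphism, and one iterates with $[m_4']$, $[m_5'']$, and so on. Cohomological injectivity is a strictly weaker statement than the dg Lie splitting your Step 3 asserts --- the example above shows that splitting simply does not exist --- and it is the most that your Steps 1--2 can legitimately be combined with. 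Salvaging a genuinely gauge-theoretic proof would require replacing $p$ by homotopy-coherent machinery compatible with the brackets (Tamarkin-type technology), which is considerably harder than the obstruction-theoretic induction; as written, Step 3 is a genuine gap, not a bookkeeping issue.
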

Recall that a space $X$ is called rationally formal if the Sullivan-de Rham algebra $A_{PL}(X;\Q)$ is formal as a cdga (\cite[Chapter 12]{felixrht}). In that case the rational homotopy type of $X$ is a formal consequence of its cohomology $H^*(X;\Q)$, meaning that $H^*(X;\Q)$ determines the rational homotopy type of $X$. Moreover, it is known that there exists a zig-zag of quasi-isomorphisms connecting $A_{PL}(X;\Q)$ with the singular cochain algebra $C^*(X;\Q)$ of $X$ (\cite[Theorem 10.9]{felixrht}).  An immediate topological consequence is the following corollary:
\begin{cor}
A space $X$ is rationally formal if and only if the singular cochain algebra $C^*(X;\Q)$ of $X$ is formal as a dga. 
\end{cor}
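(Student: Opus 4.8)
The plan is to unwind the two topological notions into statements about the commutative and the associative dg algebra models of $X$, and then to invoke Theorem \ref{mainthmtwo} as the bridge between them. By definition, $X$ is rationally formal precisely when its Sullivan--de Rham algebra $A_{PL}(X;\Q)$ is formal as a cdga. Since $A_{PL}(X;\Q)$ is a genuine cdga over $\Q$, a field of characteristic zero, Theorem \ref{mainthmtwo} applies and tells us that $A_{PL}(X;\Q)$ is formal as a cdga if and only if it is formal as a dga. It therefore remains only to transport dga-formality from $A_{PL}(X;\Q)$ to the singular cochain algebra $C^*(X;\Q)$, and this is exactly what the zig-zag of quasi-isomorphisms of \cite[Theorem 10.9]{felixrht} provides: those are morphisms of dgas, and dga-formality is invariant under such zig-zags, because a dga quasi-isomorphism induces an isomorphism of graded algebras on cohomology, so a zig-zag down to $H^*(X;\Q)$ on one side can be spliced with the connecting zig-zag to produce one on the other side. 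Chaining the equivalences then gives that $X$ is rationally formal $\iff$ $A_{PL}(X;\Q)$ is cdga-formal $\iff$ $A_{PL}(X;\Q)$ is dga-formal $\iff$ $C^*(X;\Q)$ is dga-formal.

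The conceptual point to keep in mind---and the only place where any care is needed---is that $C^*(X;\Q)$ with its cup product is merely an associative dg algebra and is never commutative on the nose, whereas the commutative information lives on the side of $A_{PL}(X;\Q)$. Thus the passage from the commutative world to the associative one cannot be effected by a naive identification of models; it is precisely the equivalence of commutative and associative formality established in Theorem \ref{mainthmtwo} that licenses it, for otherwise one would only learn that $C^*(X;\Q)$ is dga-formal if and only if $A_{PL}(X;\Q)$ is dga-formal, which is a priori weaker than rational formality. Consequently I expect no genuine obstacle in this corollary: all the real difficulty is contained in Theorem \ref{mainthmtwo}, and what is left are the two routine verifications used above, namely that the comparison between $A_{PL}(X;\Q)$ and $C^*(X;\Q)$ is realized by dga quasi-isomorphisms and that formality as a dga is a zig-zag invariant.
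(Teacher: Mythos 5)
Your proposal is correct and is exactly the argument the paper intends: the corollary is stated there as an immediate consequence of Theorem \ref{mainthmtwo}, the definition of rational formality via $A_{PL}(X;\Q)$, and the zig-zag of dga quasi-isomorphisms between $A_{PL}(X;\Q)$ and $C^*(X;\Q)$ from \cite[Theorem 10.9]{felixrht}. Your additional remarks on why the naive identification fails and why dga-formality is a zig-zag invariant are sound and simply make explicit what the paper leaves implicit.
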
 

\subsection*{Overview} 
The reader is assumed to be familiar with the theory of operads and with the notions of $A_\infty$-, $\cinf$, and $\linf$algebras.  We refer the reader to \cite{lodayoperad,markloperad,keller01} for introductions to these subjects.

In Section 2 we review  Baranovsky's universal enveloping construction on the category of $\linf$algebras (\cite{baranovsky08}). The construction  is a generalization of the universal enveloping algebra functor and is an important ingredient in the proof of Theorem \ref{mainthmone}. 
In Section 3 we present  an obstruction theory for formality in different categories. The obstructions will be cohomology classes of certain cohomology groups.
 The obstruction theory together with Baranovsky's universal enveloping will give us tools to compare the concept of formality in $\dga$ and $\dgl$  ($\charac\k=0$). This will be treated in Section 4 and will finally yield a proof of Theorem \ref{mainthmone}.
  In Section 5 we prove Theorem \ref{mainthmtwo}.
 
  The reader interested only in Theorem \ref{mainthmone} may skip Section  5, whilst the reader  only interested in Theorem \ref{mainthmtwo}  may skip Sections 2 and 4.

\subsection*{Conventions}
\begin{itemize}
\item $S_k$ denotes the symmetric group on  $k$ letters.

\item The Koszul sign of a permutation $\sigma\in S_k$ acting on $v_1\cdots v_k\in V^{\ot k}$ (where $V$ is a graded vector space) is given by the following rule: The Koszul sign of an adjacent transposition that permutes $x$ and $y$ is given by $(-1)^{|x||y|}$. This is then extended multiplicatively to all of $S_k$ (recall that the set of adjacent transpositions generates $S_k$).
\item The suspension $sV$ of a graded vector space $V$ is the graded vector space given by $sV^i=V^{i+1}$. The suspension of a cochain complex $(C,d)$ is the cochain complex $(sC,-sds^{-1})$.
\item  A standing assumption will be that $\k$ is  a field of characteristic zero. We will only consider (co)algebras and (co)operads over fields of characteristic zero. 
\end{itemize}

\begin{center}\textbf{Acknowledgments}\end{center} I would like to thank my advisor Alexander Berglund for  his invaluable guidance during the preparation of this paper  and  also for proposing the topics treated here.  I would also like to thank Stephanie Ziegenhagen for her careful reading of this paper and her comments and suggestions. Thanks to Kaj Börjeson and Felix Wierstra for introducing me to many concepts that I had very little knowledge about.
Finally, I would like to  thank Victor Protsak for helping me with the proof of Proposition \ref{lie-mod-incl} (at MathOverflow).

\section{Baranovsky's universal enveloping for $\linf$algebras}
The proof of Theorem \ref{mainthmone} will partly rely on a 
construction by Baranovsky (\cite{baranovsky08}) that generalizes 
the universal enveloping algebra construction to $\linf$algebras.

Applying Baranovsky's universal enveloping (denoted by $\ubar$) to an $\linf$algebra $(L,\{l_i\})$ gives an $\ainf$algebra
$\ubar(L,\{l_i\})= (\Lambda L, \{m_i\}) 
$  where $\Lambda L$ is the underlying graded vector space of the symmetric algebra on $L$. Applying $\ubar$ to an $\linf$morphism $\phi\colon L\to L'$ gives an $\ainf$morphism $\ubar(\phi)\colon\ubar(L) \to \ubar(L')$.

$\ubar$ is not a functor since it fails to preserve compositions (i.e. $\ubar(\psi\circ\phi) \neq \ubar(\psi)\circ\ubar(\phi)$ in general). However,  the restriction of $\ubar$ to $\dgl\subset \infty\text-\linf\boldsymbol{\mathrm{alg}}$ (here $\infty\text-\linf\boldsymbol{\mathrm{alg}}$ denotes the category of $\linf$algebras with $\infty$-morphisms), coincides with the usual universal enveloping algebra functor, denoted by $U$.

We record some properties of $\ubar$.

\begin{thm}\label{ubar} 
Let $(L,\{l_i\})$ be an $\linf$algebra with universal enveloping $\ubar(L,\{l_i\}) = (\Lambda L, \{m_i\})$. The following properties holds:
\begin{enumerate}[label = (\alph*)]
\item $m_1\colon \Lambda L\to \Lambda L$ is the symmetrization  of $l_1$ (i.e. $m_1 = \Lambda(l_1)$).
\item If $\phi\colon (L,\{l_i\})\to (L',\{l_i'\})$ is an $\linf$quasi-isomorphism, then $\ubar(\phi)\colon \break(\Lambda L,\{m_i\})\to (\Lambda L',\{m_i'\})$ is an $\ainf$quasi-isomorphism.
\item The map $m_j\colon    (\Lambda L)^{\ot j}\to \Lambda L$ depends only on $L,l_1,l_2,\dots,l_j$. In particular, if $(L,\{k_i\})$ is another $\linf$algebra structure on  the same vector space $L$ with $l_j=k_j$ for $j=1,2,\dots,d$, then we have that $\ubar(L,\{k_i\}) = (\Lambda L ,\{n_i\})$ with $n_j= m_j$ for $j=1,2,\dots,d$. 
\item
Let $v_1,\dots,v_j\in L\subset \ubar L$. Then 
\[l_j(v_1\cdots v_j) = \sum_{\sigma\in S_j}\gamma(\sigma;v_1,\dots,v_j)m_j(v_{\sigma^{-1}(1)}\cdots v_{\sigma^{-1}(j)}),\]
where $\gamma(\sigma;v_1,\dots,v_n)$ is the product of the sign of the permutation $\sigma$ and the Koszul sign obtained by applying $\sigma$ on $v_1\cdots v_j$. 
\item The restriction $\ubar|_{\dgl}$ of $\ubar$ to $\dgl$ coincides with the ordinary universal enveloping algebra functor.
\end{enumerate}
\end{thm}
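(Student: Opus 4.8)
The plan is to extract each of the five properties directly from Baranovsky's explicit construction of $\ubar$, isolating the one genuinely homological step, namely (b). Recall that the construction equips the symmetric algebra $\Lambda L$ with operations $\{m_i\}$ assembled from the brackets $l_1,\dots,l_i$ together with the (de)concatenation coproduct on $\Lambda L$; in particular each $m_j$ is built by contracting a bounded number of coproduct components against the $l_i$. Properties (a), (c), (d) and (e) are then a matter of reading off these formulas and keeping track of arity, while (b) requires a separate argument about quasi-isomorphisms.

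First I would treat (a) and (c) together. The operation $m_1$ is the linear part of the construction, and since only $l_1$ can contribute a unary operation, $m_1$ is forced to be the derivation extension $\Lambda(l_1)$ of $l_1$ to $\Lambda L$; this is (a). More generally, a $j$-ary output can only be produced by contracting the coproduct against brackets $l_i$ with $i\le j$, since a bracket of arity exceeding $j$ cannot appear in a $j$-fold operation for arity reasons. Hence $m_j$ is a function of $L,l_1,\dots,l_j$ alone, and replacing the higher brackets $l_{j+1},l_{j+2},\dots$ leaves $m_j$ unchanged, which is exactly (c).

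Next I would establish (d) and (e). For (d) I would evaluate the explicit expression for $m_j$ on a tensor of elements $v_1,\dots,v_j\in L\subset\Lambda L$ and form the signed sum over $S_j$ with the sign $\gamma(\sigma;-)$ prescribed in the statement. The coproduct contractions collapse under this total (anti)symmetrization, leaving precisely the bracket $l_j$; this is the higher-arity analogue of the classical identity $[x,y]=xy-(-1)^{|x||y|}yx$ recovered at $j=2$. This step carries the heaviest sign bookkeeping, to be done with the Koszul convention fixed in the conventions. Property (e) then follows by specializing to a genuine dgl, where $l_i=0$ for $i\ge 3$: by (c) the operations $m_i$ with $i\ge 3$ vanish as well, $m_2$ is associative, and $(\Lambda L,m_2)$ is identified with the classical $UL$ through the Poincar\'e--Birkhoff--Witt isomorphism, so that $\ubar|_{\dgl}=U$.

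The main obstacle is (b). Here I would filter $\Lambda L$ by symmetric word length, $F_p=\Lambda^{\le p}L$. The differential $m_1=\Lambda(l_1)$ preserves this filtration, and the linear part of the $\ainf$morphism $\ubar(\phi)$ is filtered with induced map on the $p$-th subquotient equal to the functorial symmetric power $\Lambda^p(\phi_1)$ of the linear part $\phi_1\colon(L,l_1)\to(L',l_1')$. Since $\phi$ is an $\linf$quasi-isomorphism, $\phi_1$ is a quasi-isomorphism, and over a field of characteristic zero the symmetric power $\Lambda^p(-)=\big((-)^{\ot p}\big)_{S_p}$ preserves quasi-isomorphisms, because taking $S_p$-coinvariants is exact. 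A standard comparison of the spectral sequences attached to this filtration then shows that the linear part of $\ubar(\phi)$ is a quasi-isomorphism, which is precisely what it means for $\ubar(\phi)$ to be an $\ainf$quasi-isomorphism. The only point needing care is the convergence of these spectral sequences, which holds because the filtration is exhaustive and bounded below.
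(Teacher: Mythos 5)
Your proposal has two genuine gaps, both stemming from the same issue: the ``explicit construction'' you argue from already assumes what needs to be proved. Baranovsky's $\ubar$ is not given by formulas that visibly assemble $m_j$ out of $l_1,\dots,l_j$ and coproduct contractions; it is produced by the basic perturbation lemma applied to a coalgebra contraction of $T^*_c(s\overline{\Omega(\Lambda_c^*(sL))})$ onto $T^*_c(s\overline{\Lambda_a^*(L)})$, perturbed by $t_\mu+t_2+t_3+\cdots$, so each $m_n$ is a priori an infinite sum of composites of the form $F(\cdots H)\cdots(\cdots H)t_\mu G$ in which \emph{every} $t_i$ (hence every $l_i$) may occur. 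In particular your ``arity reasons'' for (a) and (c) do not rule out the higher brackets: the inputs of $m_j$ are arbitrary words in $\Lambda L$, so for instance $l_4$ has four elements of $L$ available inside $m_2(v_1v_2\ot v_3v_4)$, and nothing about arity excludes it. What excludes it in the paper is the geometric grading: the image of $G$ lies in geometric degree $0$, $t_\mu H$ raises that degree by $1$, while $t_iH$ lowers it by $i-2$ and vanishes on elements of geometric degree $<i-2$; since a term contributing to $m_n$ contains $t_\mu H$ exactly $n-2$ times, no $t_iH$ with $i>n$ can occur in a non-zero such term. Some bookkeeping of this kind is unavoidable; without it, (a) and (c) are assertions rather than proofs.

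The second gap is the deduction of (e) from (c). For a dgl ($l_i=0$ for $i\geq 3$), property (c) only says that each $m_i$, $i\geq 3$, depends on $l_1,l_2$ alone; it does \emph{not} say that this dependence is trivial, so ``by (c) the operations $m_i$ with $i\geq 3$ vanish'' is a non sequitur. That vanishing, together with the identification of $m_2$ with the product of $UL$ under PBW, is exactly the content of the statement $\ubar|_{\dgl}=U$ (Theorem 3.v of Baranovsky's paper), i.e.\ of (e) itself; your argument presupposes it. Similarly, (d) defers all of the actual work: the ``sign bookkeeping'' is the proof, and it requires explicit formulas you have not derived, which is why the paper cites Baranovsky (Theorem 3.vii) here. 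Your part (b), by contrast, is essentially the paper's argument --- identify the linear component of $\ubar(\phi)$ with $\Lambda(\phi_1)$ (this identification is again Baranovsky's Theorem 3.i, not something one can read off without it) and use that $\Lambda(-)$ preserves quasi-isomorphisms in characteristic zero by exactness of $S_n$-coinvariants --- except that word length is preserved on the nose, so $\Lambda(\phi_1)=\bigoplus_p\Lambda^p(\phi_1)$ is a direct sum of quasi-isomorphisms and your spectral sequence is unnecessary.
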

Properties (a)-(c) are not explicitly stated in \cite{baranovsky08} so we will  briefly recall Baranovsky's construction in order to prove these properties.

\subsection*{A summary of the construction}
Given a complex $(V,d)$, let $T^*_a(V)$ ($T^*_c(V)$) and $\Lambda^*_a(V)$ ($\Lambda^*_c(V)$) denote the tensor  respective  symmetric (co)algebra on $V$   with (co)differential corresponding to the unique (co)derivation extension of $d$.

Let $(L,\{l_i\})$ be an $\linf$algebra. We start by considering the complex $(L,l_1)$ and construct from it  two coalgebras; $(T^*_c(s\overline{\Lambda_a^*( L)}), d^\circ)$ and $(T^*_c(s\overline{\Omega(\Lambda_c^*(sL))}),\delta^\circ)$ (where  $\Omega$ denotes the cobar construction and $\overline{(\cdot)}$ denotes the augmentation ideal).

Baranovsky shows that there exists a coalgebra contraction from  $T^*_c(s\overline{\Omega(\Lambda_c^*(s L))})$ to $T^*_c(s\overline{\Lambda_a^*(L)})$
\begin{equation}\label{contraction}\xymatrix{**[r]{T^*_c(s\overline{\Omega(\Lambda_c^*( sL))})}\ar @`{(-12,-12),(-12,12)}  ^H\ar@<3 pt>[rr]^{\qquad\qquad\qquad F}  &&**[r]T^*_c(s\overline{\Lambda_a^*(L)})\ar@<2 pt>[ll]^{\qquad\qquad\qquad G}}.\end{equation}
By comparing $T^*_c(s\overline{\Omega(\Lambda_c^*( sL))})$ with the cobar-bar construction on the Chevalley-Eilenberg construction on the $\linf$algebra $(L,\{l_i\})$, denoted by $B\Omega C(L)$, we see that they only differ by their differentials. The differential $\delta$ of $B\Omega C(L)$ is given by
\[\delta = \delta^\circ + t_\mu+ t_L,\] 
where $t_\mu$ is the part that encodes the multiplication on  
$\Omega C(L)$ and where $t_L = t_2+t_3+\dots$ encodes the $
\linf$structure on $L$ with $t_i$ encoding $l_i$. Applying the basic perturbation lemma  to the perturbation $t_\mu+t_L$ of the 
contraction above results in a new differential $d = (d^
\circ)_{t_\mu+t_L}$ on $T^*_c(s\overline{\Lambda^*_a(L)})$ 
which corresponds to an $\ainf$algebra structure on $\Lambda 
L$, which will be Baranovsky's universal enveloping $\ubar(L,
\{l_i\})$.

\subsection*{Geometric grading}
Baranovsky introduces a geometric grading on $B\Omega C(L)$ by first declaring that an element of $s^{-1}\Lambda^{k}_c(sL)$ is of geometric degree $k-1$ and  extends then the grading to $B\Omega C(L)$  by the following rule; the  geometric degree of $\alpha\ot\beta$  is the sum of the geometric degrees of $\alpha$ and $\beta$.
The maps in the contraction \eqref{contraction} and the perturbations $t_\mu$ and $t_L = t_2+t_3+\dots$ satisfy some conditions regarding the geometric grading:
\begin{itemize}
\item The image of $G$ belongs to the geometric degree 0 part.
\item $H$ increases the geometric degree by 1.
\item $t_\mu$ preserves the geometric degree.
\item $t_i$ decreases the geometric degree by $i-1$ and vanishes on elements of geometric degree $< i-1 $
\end{itemize}

\subsection*{Proof of Theorem \ref{ubar}}
\begin{proof}
By the basic perturbation lemma (the lemma is stated in \cite[Lemma 2]{baranovsky08})  we have that the differential $d= (d^\circ)_{t_\mu+t_L}$ is given by 
\begin{equation*}d = d^\circ + F\left(\sum_{i\geq0} ((t_\mu+t_L)H)^i\right)(t_\mu+t_L) G.\end{equation*}
Since the image of $G$ belongs to the geometric degree 0 part and since $t_L= t_2+t_3+\dots$ vanishes on elements of geometric degree 0, we may rewrite the differential as
\begin{equation}\label{transferedpert}
d = d^\circ + F\left(\sum_{i\geq0} (t_\mu H+t_2H+t_3H+\dots)^i\right)t_\mu G.
\end{equation} 

The terms in the differential above that correspond to $m_n\colon    \ubar(L)^{\ot n}\to \ubar(L)$ are those terms that contain $t_\mu$ exactly $(n-1)$ times (see the proof of \cite[Theorem 3]{baranovsky08} for details).
\\\\
(a) Since $d^\circ$ is the only term in \eqref{transferedpert} that does not contain $t_\mu$ as a factor, we have that $d^\circ$ is the part of the differential $d$ that corresponds to  $m_1\colon    \ubar(L)\to\ubar(L)$. One can easily see that $d^\circ$ corresponds to $\Lambda(l_1)\colon    \Lambda L \to \Lambda L$.
\\\\
(b) By \cite[Theorem 3.i]{baranovsky08} we have that the first component $ \ubar(\phi)_1$ of $\ubar(\phi)$ is given by $\Lambda(\phi_1)$, where $\phi_1$ is the first component of $\phi$. In order to show that $\ubar(\phi)$ is an $\ainf$quasi-isomorphism, we need to show that 
\begin{equation}\label{symmetrization}\ubar(\phi)_1 = \Lambda(\phi_1) \colon    (\Lambda L,m_1)\to (\Lambda L',m_1')\end{equation}
 is a quasi-isomorphism of complexes.  Since $\phi$ is an $\linf$quasi-isomorphism, it follows that $\phi_1\colon   (L,l_1)\to (L',l_1')$ is a quasi-isomorphism  of complexes. By (a), $m_1$ and $m_1'$ are given by symmetrizations of $l_1$ and $l_1'$ respectively, which means that that the map in \eqref{symmetrization} is obtained by applying the symmetrization functor $\Lambda(-)$ on $\phi_1\colon    (L,l_1)\to (L',l_1')$. Over a field $\k$ of characteristic zero we have that the symmetrization functor $\Lambda(-)$ preserves quasi-isomorphisms (since $L\ot_\k-$ is exact and that taking $S_n$-coinvariants is also exact), and (b) follows.
\\\\
(c) Firstly, $H$ depends only on $L$ and $l_1$ by \cite[Theorem 1]{baranovsky08}. Moreover, we have that $t_\mu H$ increases the geometric degree by 1 while $t_iH$ decreases the geometric degree by $(i-2)$. Furthermore, $t_iH$ vanish on elements of degree $<i-2$. That means if there exists a non-zero term containing $t_iH$, then $t_\mu H$ has to occur at least $(i-2)$ times before $t_iH$ (i.e. to the right  of $t_i H$). 

We have that $m_n$ corresponds to those non-zero terms that contain $t_\mu$ exactly $(n-1)$ times, which is equivalent to  those terms that contain contain $t_\mu H$ exactly $(n-2)$ times. These terms cannot contain any $t_i H$ where $i>n$ (since they are non-zero). 
From this and the fact that $t_i$ is completely encoded by $l_i$, claim (c) follows. 
\\\\
(d) See \cite[Theorem 3.vii]{baranovsky08}
\\\\
(e) See \cite[Theorem 3.v]{baranovsky08}
\end{proof}

\section{Minimal $\p_\infty$-algebras and obstructions to formality} 
Given an algebraic operad $\p$, we have that the cohomology of a dg $\p$-algebra has an induced dg $\p$-algebra structure with a trivial differential (\cite[Proposition 6.3.5]{lodayoperad}). Thus, the notion of formality makes sense in the category of dg $\p$-algebras. 

If $\mathscr P$ is a Koszul operad, we denote the operad obtained by applying the cobar construction on the Koszul dual cooperad of $\p$ by $\p_\infty$ (\cite[Chapter 10]{lodayoperad}). The category of $\p_\infty$-algebras with $\p_\infty$-morphisms (denoted by $\infty\text-\p_\infty\text-\boldsymbol{\mathrm{alg}}$) contains the category of $\p$-algebras as a subcategory and has some properties that the category of $\p$-algebras lacks, e.g. that quasi-isomorphisms are invertible up to homotopy. 
\begin{thm}[{\cite[Theorem 11.4.9]{lodayoperad}}]
Let $\p$ be a Koszul operad over a field of characteristic zero and let $A$ be a dg $\p$-algebra. Then  $A$ is formal as a $\p$-algebra if and only if there exists a $\p_\infty$-algebra quasi-isomorphism $A\to H(A)$. 
\end{thm}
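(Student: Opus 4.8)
The plan is to prove the two implications separately, in each case passing through the homotopy theory of $\p_\infty$-algebras with $\infty$-morphisms. I would rely on two structural inputs. First, that over a field of characteristic zero an $\infty$-quasi-isomorphism of $\p_\infty$-algebras admits a homotopy inverse which is again an $\infty$-quasi-isomorphism. Second, that $\infty$-morphisms of $\p_\infty$-algebras are precisely morphisms of the associated conilpotent dg $\p^{\ashriek}$-coalgebras produced by the bar construction $B$ relative to the Koszul twisting morphism $\kappa\colon \p^{\ashriek}\to \p$, so that $B$ and the cobar construction $\Omega$ supply a rectification functor $\Omega B$ from $\p_\infty$-algebras to genuine dg $\p$-algebras.

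For the forward implication, suppose $A$ is formal, so that there is a zig-zag
\[A\xleftarrow{\sim}C_1\xrightarrow{\sim}\cdots\xleftarrow{\sim}C_n\xrightarrow{\sim}H(A)\]
of quasi-isomorphisms of dg $\p$-algebras. Every dg $\p$-algebra is a $\p_\infty$-algebra and every such quasi-isomorphism is an $\infty$-quasi-isomorphism. Using that $\infty$-quasi-isomorphisms are invertible up to homotopy, I would replace each backward-pointing arrow by a homotopy-inverse $\infty$-quasi-isomorphism and compose the whole chain inside $\infty\text-\p_\infty\text-\boldsymbol{\mathrm{alg}}$. This yields a single $\infty$-quasi-isomorphism $A\to H(A)$, where $H(A)$ carries its induced $\p$-algebra structure, i.e. the $\p_\infty$-structure with zero differential and vanishing higher operations. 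Note that the homotopy transfer theorem alone does not suffice here, since it only equips $H(A)$ with \emph{some} $\p_\infty$-structure, whose nontriviality is exactly the obstruction to formality; the homotopy-invertibility argument is what forces the target to be the strict $H(A)$.

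For the converse, the strategy is rectification. Given an $\infty$-quasi-isomorphism $f\colon A\to H(A)$, its associated dg $\p^{\ashriek}$-coalgebra morphism $Bf\colon BA\to BH(A)$ becomes, after applying $\Omega$, a strict morphism $\Omega Bf\colon \Omega BA\to \Omega BH(A)$ of dg $\p$-algebras. The counit of the bar-cobar adjunction furnishes natural strict quasi-isomorphisms $\Omega BA\xrightarrow{\sim}A$ and $\Omega BH(A)\xrightarrow{\sim}H(A)$; moreover $\Omega Bf$ is itself a quasi-isomorphism, since the linear part of $f$ is. These assemble into the zig-zag
\[A\xleftarrow{\sim}\Omega BA\xrightarrow{\,\Omega Bf\,}\Omega BH(A)\xrightarrow{\sim}H(A)\]
of genuine dg $\p$-algebra quasi-isomorphisms, witnessing that $A$ is formal.

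I expect the main obstacle to lie in the two structural inputs rather than in the bookkeeping: establishing that the bar-cobar counit $\Omega BA\to A$ is a quasi-isomorphism of dg $\p$-algebras, and that $\infty$-quasi-isomorphisms are homotopy invertible. Both depend essentially on $\p$ being Koszul, so that $\kappa$ is a Koszul (acyclic) twisting morphism and the bar-cobar construction is a resolution, and on $\charac \k=0$, so that the relevant (co)invariants functors are exact and the homotopy transfer machinery applies. Granting these, the remaining argument is formal.
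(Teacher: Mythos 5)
The paper offers no proof of this statement at all --- it is imported verbatim from Loday--Vallette (Theorem 11.4.9) and used as a black box --- so there is no internal argument to compare against; your proof is correct and essentially reproduces the one in that reference: homotopy invertibility of $\infty$-quasi-isomorphisms collapses the formality zig-zag into a single $\p_\infty$-quasi-isomorphism $A\to H(A)$, and bar--cobar rectification converts such a morphism back into a zig-zag of strict quasi-isomorphisms. The only step stated more casually than it deserves is the claim that $\Omega B f$ is a quasi-isomorphism ``since the linear part of $f$ is'': the cobar functor $\Omega$ does not preserve arbitrary quasi-isomorphisms of conilpotent dg $\p^{\ashriek}$-coalgebras, so this should be argued by two-out-of-three, using that the counits $\Omega B A\xrightarrow{\sim} A$ and $\Omega B H(A)\xrightarrow{\sim} H(A)$ are quasi-isomorphisms (here Koszulness of $\p$ and $\charac\k=0$ enter) and that the strict composite $\Omega B A\to \Omega B H(A)\to H(A)$ realizes $f$ up to homotopy, hence induces on homology the isomorphism $H(f_1)\circ H(\varepsilon_A)$ and is therefore a quasi-isomorphism.
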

In this paper we will be interested in algebras over the operads $\mathscr Ass$, $\mathscr Com$, and $\mathscr Lie$, which are all Koszul. From now on, $\p$ is either $\mathscr Ass$, $\mathscr Com$ or $\mathscr Lie$, which means that a dg $\p$-algebras is either  a dga, cdga, or dgl, and that a $\p_\infty$-algebra is either an $\ainf$, $\cinf$, or $\linf$algebra.

We denote the Koszul dual operad of $\p$ by $\p^!$ (recall that $\mathscr Ass^! = \mathscr Ass$, $\mathscr Com^! = \mathscr Lie$, and $\mathscr Lie^! =\mathscr Com$). 
We have that a $\p_\infty$-algebra structure on  a vector  space $A$ is a collection $(A,\{b_n\})$ where $b_n\colon    \p^!(n)\ot_{S_n}A^{\ot n}\to A$, $n\geq 1$, are linear maps of degree $n-2$ that satisfy certain compatibility conditions (see \cite{lodayoperad}). 
A dg $\p$-algebra $(A,b_1,b_2)$  may be regarded as $\p_\infty$-algebra by identifying $(A,b_1,b_2)$ with $(A,b_1,b_2,0,0,\dots)$.
A morphism of $\p_\infty$-algebras $\phi\colon    (A,\{b_n\})\to (A',\{b_n'\})$, is a collection $\phi = (\phi_n)$ where $\phi_n$ are maps $\p^!(n)\ot A^{\ot n}\to A'$ of degree $n-1$ that satisfy certain conditions.

Given an operad $\p$ there is the notion of the operadic 
cochain complex $C^*_{\p}(A)$ of a $\p$-
algebra $A$, where $C^n_{\p}(A) = \Hom(\p^!(n)\ot_{S_n}A^{\ot n},A)$  (see \cite[Chapter 12]{lodayoperad} for details). We have 
that $C^*_{\mathscr Ass}(A)$ is the Hochschild cochain 
complex of $A$, $C^*_{\mathscr Com}(C)$ is the Harrison 
cochain complex of $C$, and $C^*_{\mathscr Lie}(L)$ is the Chevalley-Eilenberg cochain complex of $L$. Since we will consider  $\p$-algebras with non-trivial homological grading, the operadic cohomology will be endowed by a non-trivial homological grading, and $C^{n,p}_\p(A)$  will denote the part of  $\Hom(\p^!(n)\ot_{S_n}A^{\ot n},A)$ that is of homological degree $p\in \Z$.

The main goal of this section is to present an obstruction theory for formality in $\dga$, $\cdga$ and $\dgl$ over any field $\k$ of characteristic zero. This obstruction theory is presumably well-known for the experts, but we will recall it and formulate it in a way that is suitable for the context of this paper. In order to do that we need to recall some results by Kadeishvili (\cite{kadeishvili09}) on minimal $\ainf$algebras and the Hochschild cochain complex and minimal $\cinf$algebras and the Harrison cochain complex. The ideas of Kadeishvili apply also to minimal $\linf$algebras and the Chevalley-Eilenberg cochain complex (we leave the details to the reader).

\subsection*{Minimal $\p_\infty$-algebras}
We will now present some results by Kadeishvili in \cite{kadeishvili09}.

\begin{dfn}
 Let $\p = \mathscr Ass, \mathscr Com$, or $\mathscr Lie $. A $\p_\infty$-algebra $(H,\{b_i\})$ is called \textit{minimal} if $b_1 = 0$. 
\end{dfn}
Given a minimal $\p_\infty$-algebra $(H,0,b_2,b_3,\dots)$, we have that $\h = (H,0,b_2)$ is a $\p$-algebra, and therefore it makes sense to consider the operadic cochain complex $C^*_\p(\h)$ of $\h$.

\begin{prop}[\cite{kadeishvili09}]\label{KadResults} Suppose $\p = \mathscr Ass$, $\mathscr Com$, or $ \mathscr Lie$. Then  the following holds:
\begin{itemize}
\item[(a)] Let 
$(H,\{b_i\})$ and $(H,\{b_i'\})$ be two minimal $\p_\infty$-algebras  with $b_2=b_2'$ and let
$\phi = (\id,0,\dots,0,\phi_k,\phi_{k +1},\dots)\colon    (H,\{b_i\})\to (H,\{b_i'\})$ be a $\p_\infty$-algebra isomorphism. The formal sums 
\[\bar b = b_3+b_{4}+\dots,\qquad \bar{b}'= b'_3+b'_{4}+\dots,\qquad \bar\phi = \phi_k+\phi_{k+1}+\dots \]
in $C_\p^*(\h)$ (where $\h=(H,0,b_2)$)  satisfy the following equality
\[ \bar b-\bar{b}' = \partial_\p(\bar \phi) + (\text{elements in $C^{\geq k+2}(\h)$})\] 
($\partial_\p$ is the differential of $C^*_\p(\h)$).

\item[(b)] Let  $(H,\{b_i\})$ be a minimal $\p_\infty$-algebra, and let $\{\phi_n\in C^{n,n-2}_\p(\h)\}_{n\geq 2}$ be any collection of maps . Then there exists a minimal $\p_\infty$-algebra 
$(H,\{b_i'\})$ with $b'_2 = b_2$  such that $\phi = (\id,\phi_2,\phi_3,\dots)$ is a $\p_\infty$-algebra isomorphism $(H,\{b_i\})\to(H,\{b_i'\})$.
\end{itemize}
\end{prop}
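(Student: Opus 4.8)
\section*{Proof proposal}

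The plan is to translate both statements into the language of codifferentials on cofree coalgebras, where the structure equations for $\p_\infty$-algebras and their morphisms become transparent, and to read off the claimed identities by filtering according to arity. Recall that a minimal $\p_\infty$-algebra structure $\{b_i\}$ on $H$ is the same data as a codifferential $D=d_2+d_3+\dots$ on the cofree conilpotent $\p^\ashriek$-coalgebra cogenerated by $H$, with $d_i$ the coderivation whose corestriction to the cogenerators is $b_i$; minimality ($b_1=0$) is exactly the absence of a linear part. A $\p_\infty$-morphism $\phi=(\phi_n)$ corresponds to a morphism of $\p^\ashriek$-coalgebras $\Phi=\id+\Phi_2+\Phi_3+\dots$ intertwining the two codifferentials, $\Phi D=D'\Phi$, where $\Phi_i$ is the coalgebra map determined by the corestriction $\phi_i$. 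Under this dictionary the differential $\partial_\p$ of $C^*_\p(\h)$ is the bracket $[b_2,-]$ for the intrinsic (convolution) Lie structure on $C^*_\p(H)$, and the minimal $\p_\infty$-structures $b=b_2+\bar b$ are precisely its Maurer--Cartan elements.

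For (b) I would transport the structure along $\phi$. Because $\phi_1=\id$, the coalgebra morphism $\Phi$ induced by $(\id,\phi_2,\phi_3,\dots)$ is upper triangular with respect to the coradical filtration and has identity leading term, hence is invertible. Setting $D':=\Phi D\Phi^{-1}$ produces a codifferential, being conjugate to one by a coalgebra isomorphism, and its corestrictions yield a $\p_\infty$-structure $\{b_i'\}$ for which $\phi$ is by construction an isomorphism $(H,\{b_i\})\to(H,\{b_i'\})$. That $b_1'=0$ follows since conjugation by a filtration-preserving map with identity leading term does not create a linear part, so the structure is minimal; and $b_2'=b_2$ follows by comparing the lowest-arity components of $\Phi D=D'\Phi$, where $\Phi=\id+(\text{higher arity})$ forces the arity-$2$ parts of $D$ and $D'$ to agree.

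For (a) I would expand the same intertwining relation $\Phi D=D'\Phi$ along the arity filtration (equivalently the first grading on $C^{*,*}_\p$) and isolate its leading nontrivial component. Writing $\phi=(\id,0,\dots,0,\phi_k,\phi_{k+1},\dots)$ and using $b_2'=b_2$, the terms linear in $\bar\phi=\phi_k+\phi_{k+1}+\dots$ that involve only the common quadratic part $b_2$ come precisely from $d_2'\Phi_{\ge k}$ and $\Phi_{\ge k}d_2$, and assemble into $[b_2,\bar\phi]=\partial_\p\bar\phi$; the genuine difference of the higher structures contributes $\bar b-\bar b'$. Every remaining term is either quadratic in the $\phi_{\ge k}$ or couples a $\phi_{\ge k}$ with some $b_{\ge3}$, and each such composite raises arity by at least one beyond $\phi_k$, so collecting them shows that $\bar b-\bar b'-\partial_\p\bar\phi$ lies in $C^{\ge k+2}(\h)$, which is the asserted identity.

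The main obstacle is bookkeeping rather than conceptual: one must verify that the leading component of $\Phi D=D'\Phi$ reproduces $\partial_\p$ on the nose, with the correct Koszul signs and symmetrization conventions, and that all correction terms genuinely land in arity $\ge k+2$. I would carry this out uniformly for $\mathscr Ass$, $\mathscr Com$ and $\mathscr Lie$ through the $\p^\ashriek$-coalgebra and convolution-Lie formalism, rather than case by case with the Hochschild, Harrison and Chevalley--Eilenberg complexes separately, so that the signs are dictated once and for all by the operad $\p$.
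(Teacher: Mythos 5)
Your proposal is correct; note, however, that the paper never proves Proposition \ref{KadResults} at all --- it is quoted from Kadeishvili's paper and used as a black box --- so the comparison is really with Kadeishvili's argument and with the formalism the paper sets up in its appendix. Kadeishvili argues component-wise from the explicit defining identities of a $\p_\infty$-morphism in each arity; you run the same computation coordinate-freely on the cofree coalgebra $\Vpd^*(sH)$, which happens to be exactly the toolkit the paper itself uses for neighbouring statements (Lemmas \ref{appendixforsta}, \ref{expofcoder} and \ref{PInfMorphReduced} manipulate morphisms $\Psi=\id+\Psi_1+\cdots$ by word-length filtration and conjugation in precisely this way), so your route integrates seamlessly with the rest of the text. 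Your argument for (b) is sound on all three load-bearing points: $\Phi$ is invertible because it is $\id$ plus word-length-decreasing terms on a conilpotent cofree coalgebra; $D'=\Phi D\Phi^{-1}$ is again a square-zero coderivation of the same degree, hence a $\p_\infty$-structure for which $\phi$ is tautologically an isomorphism; and since $D$ lowers word length by at least one while $\Phi^{\pm1}-\id$ only lowers it further, $D'$ has no word-length-preserving component (so $b_1'=0$) and its lower-by-exactly-one component agrees with that of $D$ (so $b_2'=b_2$). Your argument for (a) is also sound, and the filtration of $\Phi D=D'\Phi$ in fact yields slightly more than the stated claim: one gets $b_i=b_i'$ for all $i\le k$, and in the leading nontrivial filtration degree $b_{k+1}-b_{k+1}'$ equals the corestriction of $d_2\Phi_{(k-1)}-\Phi_{(k-1)}d_2$, where $d_2$ is the coderivation extending $b_2$ and $\Phi_{(k-1)}$ is the part of $\Phi$ linear in $\phi_k$; every remaining term is quadratic in the $\phi_{\ge k}$ or couples some $\phi_{\ge k}$ to some $b_{\ge 3}$, hence corestricts into $C^{\ge k+2}_\p(\h)$ since $2k\ge k+2$ and $k+i-1\ge k+2$ for $i\ge 3$, which is consistent with the statement because $\partial_\p(\phi_j)\in C^{j+1}_\p(\h)$. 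The single item you rightly flag as outstanding is the sign-and-convention check that this corestricted commutator is $\partial_\p(\phi_k)$, i.e.\ the identification of $\partial_\p$ with bracketing against the Maurer--Cartan element $b_2$ in the convolution Lie algebra; for the way the proposition is actually used in the paper (detecting vanishing of obstruction classes in Proposition \ref{hejhej}), the overall sign is immaterial.
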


\subsection*{Obstruction to formality} We will in the spirit of Halperin and Stasheff (\cite{halperin79}) present an obstruction theory for $\p$-algebra formality that is presumably well-known for experts. However, the author could not find in the literature an exposition that was optimized for the context of this paper. Obstructions to formality in $\cdga$ is treated in \cite{halperin79} and obstructions to formality in $\dgl$ is treated in \cite{manetti15}.
  
  We start by recalling an easy consequence of the homotopy transfer theorem for $\p_\infty$-algebras, where $\p$ is a Koszul operad.
  
  \begin{prop}\label{htt}
  Let $\p=\mathscr Ass$, $\mathscr Com$, or $\mathscr Lie$ and let $(A,\bar b_1,\bar b_2)$ be a dg $\p$-algebra. Then there exists a  $\p_\infty$-algebra structure $(H(A),0,b_2,b_3,\dots)$ on the underlying vector space of the cohomology $H(A)$ such that i) $b_2\colon    H(A)^{\ot 2}\to H(A)$ is the induced $\p$-algebra multiplication on the cohomology $H(A)$ and ii) $(A,\bar b_1,\bar b_2)$ is $\p_\infty$-quasi-isomorphic to   $(H(A),0,b_2,b_3,\dots)$
  \end{prop}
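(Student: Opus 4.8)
The plan is to deduce this directly from the homotopy transfer theorem for algebras over a Koszul operad (\cite[Section 10.3]{lodayoperad}), which applies since $\p = \mathscr{Ass}$, $\mathscr{Com}$, or $\mathscr{Lie}$ is Koszul. First I would regard the dg $\p$-algebra $(A,\bar b_1,\bar b_2)$ as a $\p_\infty$-algebra $(A,\bar b_1,\bar b_2,0,0,\dots)$ via the inclusion of $\p$-algebras into $\infty\text-\p_\infty\text-\boldsymbol{\mathrm{alg}}$, so that the transfer machinery becomes available.

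The next step is to produce a contraction of the underlying complex $(A,\bar b_1)$ onto its cohomology. Since we work over a field $\k$, the complex $(A,\bar b_1)$ splits: choosing a complement to the coboundaries inside the cocycles, and a complement to the cocycles, one obtains maps
\[ i\colon (H(A),0)\to (A,\bar b_1),\quad p\colon (A,\bar b_1)\to (H(A),0),\quad h\colon A\to A, \]
with $pi=\id$, $\id_A-ip=\bar b_1 h+h\bar b_1$, and the side conditions $h^2=0$, $hi=0$, $ph=0$; here $i$ and $p$ are quasi-isomorphisms inducing mutually inverse isomorphisms on cohomology. Applying the homotopy transfer theorem to this contraction endows the underlying space of $H(A)$ with a $\p_\infty$-algebra structure $\{b_n\}$ together with an $\infty$-quasi-isomorphism extending $i$, from $(H(A),\{b_n\})$ to $(A,\bar b_1,\bar b_2)$. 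This $\infty$-quasi-isomorphism is exactly what (ii) demands.

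It remains to identify the low-arity operations of the transferred structure. The transferred unary operation is $b_1=p\,\bar b_1\, i=0$, since $\bar b_1 i=0$; hence the structure is minimal and has the asserted form $(H(A),0,b_2,b_3,\dots)$. For (i), I would inspect the tree-sum formula for the transferred operations: in arity two the only contributing tree is the single corolla with two inputs, its leaves decorated by $i$, its vertex by $\bar b_2$, and its root by $p$, so that $b_2 = p\circ \bar b_2\circ (i\ot i)$. Under the identification $pi=\id$ this is precisely the product induced by $\bar b_2$ on $H(A)$.

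The only point genuinely requiring care is this identification of $b_2$ with the induced product; it is immediate once one observes that in arity two there is no internal edge, so no homotopy $h$ can appear and the transfer formula collapses to $p\,\bar b_2\,(i\ot i)$. Everything else is a formal application of the homotopy transfer theorem, whose hypotheses—a Koszul operad over a field of characteristic zero and a contraction onto cohomology—are met here.
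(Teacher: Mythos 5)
Your proposal is correct and follows essentially the same route as the paper, whose entire proof is a one-line citation of the homotopy transfer theorem for $\p_\infty$-algebras over a Koszul operad (\cite[Section 10.3]{lodayoperad} or \cite{berglund14}); you have simply filled in the standard details (the contraction onto cohomology, which exists over a field, and the identifications $b_1 = p\,\bar b_1\, i = 0$ and $b_2 = p\,\bar b_2\,(i\ot i)$), all of which are accurate.
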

  \begin{proof}
Since $\mathscr Ass$, $\mathscr Com$, and $\mathscr Lie$ are all Koszul, the theorem follows easily from the homotopy transfer theorem for $\p_\infty$-algebras (see \cite[Section 10.3]{lodayoperad} or \cite{berglund14}).  
  \end{proof}
  \begin{rmk}
 Note that $(A,\bar b_1,\bar b_2)$ is formal if and only if there exists a $\p_\infty$-algebra quasi-isomorphism $(H(A),0,b_2,b_3,\dots)\to (H(A),0,b_2)$ (recall that quasi-isomorphisms are invertible up to homotopy in the category of $\p_\infty$-algebras). Thus, an obstruction theory for quasi-isomorphisms $(H,0,b_2,b_3,\dots)\to (H,0,b_2)$ is an obstruction theory for formality.
\end{rmk}

Now we are ready to formulate the main theorem of this section.

\begin{thm}\label{ObstrThm} Assume $\p=\mathscr Ass$, $\mathscr Com$, or $\mathscr Lie$ and that  $\mathcal H = (H,0,b_2)$  is a  dg $\p$-algebra with trivial differential. Given a $\p_\infty$-algebra of the form $(H,0,b_2,b_3,\dots)$ there is an associated sequence of cohomology classes $[b_3],[b_4'],[b_5''],\dots$, where  $[b_k^{(k-3)}]\in H_\p^{k,k-1}(\mathcal H)$. This sequence is either an infinite sequence of vanishing cohomology classes, or finite and terminating in a non-zero cohomology class $[b_k^{(k-3)}]$. There exists a $\p_\infty$-algebra quasi-isomorphism $(H,0,b_2,b_3,\dots)\to (H,0,b_2)$ if and only if $[b_3],[b_4'],[b_5''],\dots$ is an infinite sequence of vanishing cohomology classes.
\end{thm}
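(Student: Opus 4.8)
The plan is to build the sequence of obstruction classes inductively, degree by degree, using Proposition \ref{KadResults} to gauge-fix the higher operations and to read off obstructions in the operadic cohomology $H_\p^{*,*}(\h)$. The starting point is the observation (from the preceding remark) that formality amounts to producing a $\p_\infty$-quasi-isomorphism $(H,0,b_2,b_3,\dots)\to(H,0,b_2)$, and since quasi-isomorphisms are invertible up to homotopy, we may equivalently try to kill the higher operations $b_3,b_4,\dots$ by successive $\p_\infty$-isomorphisms. So first I would reformulate the problem as: starting from $(H,\{b_i\})$, can we find a sequence of minimal $\p_\infty$-isomorphisms that push the first nonzero higher operation into ever higher degree, converging to the trivial structure $(H,0,b_2)$?

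The inductive step is where Proposition \ref{KadResults} does the work. Suppose we have reached a structure $(H,0,b_2,0,\dots,0,b_k^{(k-3)},b_{k+1},\dots)$ whose first higher operation sits in arity $k$; write $b_k^{(k-3)}$ for this operation, viewed as an element of $C_\p^{k,k-1}(\h)$. The first thing to check is that $b_k^{(k-3)}$ is a \emph{cocycle}: the $\p_\infty$-relation in the appropriate degree, together with the vanishing of all lower operations $b_3,\dots,b_{k-1}$, forces $\partial_\p(b_k^{(k-3)})=0$, so it defines a class $[b_k^{(k-3)}]\in H_\p^{k,k-1}(\h)$. Next, I would apply Proposition \ref{KadResults}(b) with a gauge $\phi=(\id,0,\dots,0,\phi_{k-1},\phi_k,\dots)$ concentrated to modify operations from arity $k$ onward; by part (a), the effect of such a gauge on $b_k^{(k-3)}$ is precisely to change it by $\partial_\p(\phi_{k-1})$ modulo terms in strictly higher arity. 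Hence $b_k^{(k-3)}$ can be made to vanish by a gauge transformation precisely when $[b_k^{(k-3)}]=0$ in $H_\p^{k,k-1}(\h)$; and if it does vanish, the same proposition lets us choose the gauge so that the resulting structure has its first higher operation in arity $\geq k+1$, producing the next class $[b_{k+1}^{(k-2)}]$. This furnishes both the well-definedness of the sequence (each class depends only on the isomorphism class of the truncated structure, again by part (a)) and the induction that generates the next term.

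Assembling these steps gives both directions. If every class vanishes, then at each stage we may choose a gauge killing the current operation; the crucial point is that these gauges can be \emph{composed} into a single $\p_\infty$-isomorphism because, by the geometric/arity bookkeeping, the gauge used at stage $k$ only alters operations of arity $\geq k$, so the infinite composite stabilizes in each fixed arity and defines an honest $\p_\infty$-isomorphism $(H,\{b_i\})\to(H,0,b_2)$, which is in particular a quasi-isomorphism. Conversely, if some class $[b_k^{(k-3)}]\neq 0$, then by well-definedness no gauge can remove the arity-$k$ operation, so no $\p_\infty$-isomorphism to $(H,0,b_2)$ exists; since any $\p_\infty$-quasi-isomorphism between minimal structures is automatically an isomorphism (its linear part is $\id$ up to an isomorphism on $H$, invertible because $b_1=0$), this rules out even a quasi-isomorphism.

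I expect the main obstacle to be the convergence/composition argument in the ``if'' direction: one must verify that the infinite sequence of gauge transformations assembles into a genuine $\p_\infty$-isomorphism rather than a merely formal expression. This is exactly where the arity filtration is essential — each operation $\phi_n$ of the limiting gauge is determined after finitely many stages, so there is no analytic convergence issue, only a careful check that the finitely-many contributions to each fixed $\phi_n$ agree with the stage-by-stage construction. A secondary subtlety is keeping the homological bidegree bookkeeping consistent, namely confirming that the obstruction to removing the arity-$k$ operation genuinely lands in $H_\p^{k,k-1}(\h)$ and not some neighboring bidegree; this is a direct consequence of the degree conventions for $\p_\infty$-operations ($b_n$ has degree $n-2$) and of the definition of $C_\p^{n,p}$, but it must be tracked precisely to match the statement.
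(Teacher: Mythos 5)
Your overall strategy is the one the paper follows: inductively gauge away the lowest higher operation using Proposition \ref{KadResults}, read off an obstruction class in $H^{k,k-1}_\p(\h)$, and, when all classes vanish, assemble the gauges into a single isomorphism onto $(H,0,b_2)$ by the arity-stabilization argument (the paper phrases this as a colimit in the remark after Proposition \ref{hejhej}). Your forward direction, including the stabilization argument and the explicit cocycle check, is sound and matches Proposition \ref{hejhej}\ref{hejhejb}.

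The gap is in the converse. You argue that if $[b_k^{(k-3)}]\neq 0$ then ``no gauge can remove the arity-$k$ operation, so no $\p_\infty$-isomorphism to $(H,0,b_2)$ exists,'' and you justify the invariance of the class ``by part (a).'' But Proposition \ref{KadResults}(a) applies \emph{only} to morphisms of the restricted form $(\id,0,\dots,0,\phi_{k-1},\phi_k,\dots)$, whereas the hypothesis you must contradict is the existence of an \emph{arbitrary} $\p_\infty$-quasi-isomorphism $(H,0,b_2,0,\dots,0,b_k,\dots)\to(H,0,b_2)$: its linear part need not be the identity, and its components $\phi_2,\dots,\phi_{k-2}$ need not vanish. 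For such a morphism the uncontrolled error terms in \ref{KadResults}(a) (the ``elements in $C^{\geq k+2}_\p(\h)$,'' counted from where $\bar\phi$ starts) land in arity $k$ as soon as $k\geq 4$, so collecting arity-$k$ terms no longer yields $b_k=\partial_\p(\phi_{k-1})$, and neither your invariance claim nor the non-existence conclusion follows. The missing ingredient is exactly what the paper proves in the appendix: Lemma \ref{appendixforsta} (normalize the linear part by composing with the strict inverse automorphism) and Lemma \ref{PInfMorphReduced} (kill the components $\phi_2,\dots,\phi_{k-2}$ by an induction that composes with the coalgebra automorphisms $e^{-\Psi_{n-3}}$, after checking that $\Psi_{n-3}$ is a coderivation commuting with $d_1$). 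This normalization is a genuine technical step, not bookkeeping; without it, or an equivalent strengthening of \ref{KadResults}(a) to arbitrary isomorphisms, the ``only if'' half of the theorem does not go through.
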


This theorem will follow easily from the following proposition.

\begin{prop}\label{hejhej} Assume $\p=\mathscr Ass$, $\mathscr Com$, or $\mathscr Lie$. Let $\h=(H,0,m_2)$ be a given minimal dg $\p$-algebra. 
\begin{enumerate}[label = {(\alph*)}]
\item Let $H_\alpha = (H,0,m_2,0,\dots,0,m_k,m_{k+1}\dots)$, $k\geq 3$,  be a $\p_\infty$-algebra that is quasi-isomorphic to $\h=(H,0,m_2)$. Then $m_k$ is a boundary in $C^*_\p(\h)$, i.e. $[m_k]=0$ in $H_\p^*(\h)$.\label{hejheja}
\item Given a $\p_\infty$-algebra $H_\alpha= (H,0,m_2,0,\dots,0,m_k,m_{k+1}\dots)$ where $[m_k] =0$ in $H^*_{\p}(\h)$, i.e. $m_k=\partial_\p(\phi_{k-1})$ for some $\phi_{k-1}\in C^{k-1}_\p(\h)$, then $H_\alpha$ is quasi-isomorphic to some $\p_\infty$-algebra  $H_\beta$ of the form
\[H_\beta = (H,0,m_2,0,\dots,0,m'_{k+1},m'_{k+2}\dots)\]\label{hejhejb}
             \end{enumerate}
\end{prop}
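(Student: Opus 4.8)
The plan is to derive Proposition~\ref{hejhej} directly from Kadeishvili's results in Proposition~\ref{KadResults}, which package precisely the bookkeeping needed to compare minimal $\p_\infty$-structures through the operadic cochain complex $C^*_\p(\h)$. The key observation is that both parts are statements about what happens to the lowest nonvanishing higher operation under a $\p_\infty$-isomorphism fixing $b_2$, and Proposition~\ref{KadResults} controls exactly this leading term modulo higher filtration.

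For part~\ref{hejheja}, I would argue as follows. Since $H_\alpha=(H,0,m_2,0,\dots,0,m_k,\dots)$ is quasi-isomorphic to $\h=(H,0,m_2)$, and quasi-isomorphisms between minimal $\p_\infty$-algebras are isomorphisms (the first component of a quasi-isomorphism between minimal objects is a quasi-isomorphism of complexes with zero differential, hence an isomorphism; after composing with its inverse one obtains a genuine $\p_\infty$-isomorphism of the form $\phi=(\id,\phi_2,\phi_3,\dots)$). Now apply Proposition~\ref{KadResults}(a) with the two structures being $\{b_i\}=\{m_i\}$ (the structure on $H_\alpha$) and $\{b_i'\}$ the trivial structure on $\h$, so that $\bar b = m_k+m_{k+1}+\cdots$ and $\bar b'=0$. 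The leading term of $\bar\phi$ sits in some component $\phi_j$, and Proposition~\ref{KadResults}(a) gives
\[
\bar b - \bar b' = \partial_\p(\bar\phi) + (\text{elements in }C^{\geq j+2}(\h)).
\]
Reading off the lowest-degree part of this equality, $m_k$ equals $\partial_\p$ applied to the leading component of $\bar\phi$, up to terms of strictly higher arity which cannot contribute to $m_k$; hence $m_k=\partial_\p(\phi_{k-1})$ for an appropriate $\phi_{k-1}\in C^{k-1}_\p(\h)$, so $[m_k]=0$. The one point requiring care is matching the filtration index $j$ of the isomorphism with the arity $k$ of the first obstruction, i.e.\ verifying that the leading nontrivial component of $\phi$ is forced to be $\phi_{k-1}$; this follows because all $m_i$ with $2<i<k$ vanish in $H_\alpha$, so no cancellation can occur at lower arity.

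For part~\ref{hejhejb}, the strategy is the reverse and uses Proposition~\ref{KadResults}(b) as an existence statement. Given $m_k=\partial_\p(\phi_{k-1})$, I would set $\phi_{k-1}$ to be the prescribed cochain, let all other $\phi_n$ vanish, and feed the collection $(\id,0,\dots,0,\phi_{k-1},0,\dots)$ into Proposition~\ref{KadResults}(b). This produces a minimal $\p_\infty$-algebra $H_\beta=(H,0,m_2,\{m_i'\})$ together with a $\p_\infty$-isomorphism $H_\alpha\to H_\beta$. It remains to check that $m_k'=0$, i.e.\ that the isomorphism has killed the $k$-th operation. For this I invoke Proposition~\ref{KadResults}(a) again, now comparing $H_\alpha$ and $H_\beta$: the leading term of the isomorphism is $\phi_{k-1}$, so $\bar m - \bar m' = \partial_\p(\phi_{k-1}) + (\text{higher arity})$, and at arity $k$ this reads $m_k - m_k' = \partial_\p(\phi_{k-1})$. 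Since $m_k=\partial_\p(\phi_{k-1})$ by hypothesis, we conclude $m_k'=0$, so $H_\beta$ indeed has the stated form $(H,0,m_2,0,\dots,0,m_{k+1}',\dots)$.

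The main obstacle I anticipate is purely one of indexing and filtration consistency rather than of substantive content: one must be scrupulous about the degree conventions (the maps $b_n$ have homological degree $n-2$ and live in $C^{n,n-2}_\p$, the isomorphism components $\phi_n$ in $C^{n,n-1}_\p$, and $\partial_\p$ shifts appropriately), and about the fact that Proposition~\ref{KadResults}(a) only controls the equality \emph{modulo} strictly higher filtration. The argument therefore works degree by degree, extracting the lowest-arity component at each stage, and the cleanest way to phrase it is to regard $C^*_\p(\h)$ as filtered by arity and to read all equalities in the associated graded at the relevant spot. Once this filtration framework is fixed, both (a) and (b) reduce to reading off a single graded component of the identities supplied by Kadeishvili.
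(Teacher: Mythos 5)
Your part~\ref{hejhejb} is essentially the paper's own proof: feed the collection $(\id,0,\dots,0,\phi_{k-1},0,\dots)$ into Proposition~\ref{KadResults}(b) and compare $H_\alpha$ with the resulting $H_\beta$ via Proposition~\ref{KadResults}(a). The one step you leave implicit is that the same identity also forces $m'_3=\dots=m'_{k-1}=0$ (below arity $k$ the right-hand side of the identity has no terms at all), which is needed for $H_\beta$ to have the stated form; that is a one-line addition.

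Part~\ref{hejheja}, however, has a genuine gap. Your argument needs the quasi-isomorphism $\phi=(\id,\phi_2,\phi_3,\dots)\colon H_\alpha\to\h$ to satisfy $\phi_2=\dots=\phi_{k-2}=0$, and you assert this is automatic ``because all $m_i$ with $2<i<k$ vanish in $H_\alpha$, so no cancellation can occur at lower arity.'' That deduction is incorrect. If the leading nontrivial component of $\phi$ is $\phi_j$ with $j\leq k-2$, then reading Proposition~\ref{KadResults}(a) at arity $j+1$ yields only $0=\partial_\p(\phi_j)$, i.e.\ $\phi_j$ is a \emph{cocycle} in $C^j_\p(\h)$, not that $\phi_j=0$. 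Such components genuinely occur: by Lemma~\ref{expofcoder}, any nonzero $\partial_\p$-cocycle in $C^2_\p(\h)$ of the appropriate homological degree exponentiates to a $\p_\infty$-automorphism $\psi$ of $\h$ with $\psi_2\neq 0$, and composing any quasi-isomorphism $H_\alpha\to\h$ with $\psi$ shifts its second component by $\psi_2$; so one cannot assume the quasi-isomorphism one is handed is ``reduced.'' And once some $\phi_j\neq 0$ with $j\leq k-2$ is present, the unidentified error terms ``elements in $C^{\geq j+2}_\p(\h)$'' in Proposition~\ref{KadResults}(a) can contribute at arity $k$ — concretely, the morphism identity at arity $k$ acquires cross terms such as $m_2(\phi_{i_1}\ot\phi_{i_2})$ with $i_1+i_2=k$ — so you can no longer read off $m_k=\partial_\p(\phi_{k-1})$.

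What is true, and what your argument actually requires, is an \emph{existence} statement: some quasi-isomorphism $H_\alpha\to\h$ with vanishing components in arities $2,\dots,k-2$ exists. This is precisely Lemma~\ref{PInfMorphReduced} of the paper, proved by induction by conjugating the corresponding morphism of quasi-free $\p^!$-coalgebras with the automorphisms $e^{\pm\Psi_{n-3}}$ of Lemma~\ref{expofcoder} so as to kill the offending components one arity at a time (your normalization $\phi_1=\id$ is the base case, Lemma~\ref{appendixforsta}). Replace your ``no cancellation'' claim by an appeal to, or a proof of, that lemma, and the remainder of your argument for (a) coincides with the paper's.
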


\begin{rmk}
 Note that  if all obstructions from Theorem \ref{ObstrThm} vanish, we will get a sequence of quasi-isomorphisms
 \[ (H,0,m_2,m_3,\dots)\to (H,0,m_2,0,m_4',m_5',\dots)\to (H,0,m_2,0,0,m_5'',m_6'',\dots)\to\dots\]
 One can easily see that the colimit of this diagram is $(H,0,m_2,0,\dots)$. Since quasi-isomorphisms between minimal $\p_\infty$-algebras are isomorphisms, it follows that\break $(H,0,m_2,m_3,\dots)\to (H,0,m_2,0,\dots)$ is an isomorphism (and in particular a quasi-isomorphism).
\end{rmk}

\begin{proof}
(a) By Lemma \ref{PInfMorphReduced},  there exists a morphism
\[ \phi = (\id,0,\dots,0,\phi_{k-1},\phi_{k},\dots)\colon    H_\alpha \to \h.\]

It follows from Proposition \ref{KadResults} (a) that 
\[m_k+ m_{k+1}+\dots  = (\partial_\p(\phi_{k-1})+  \partial_\p(\phi_{k})+\dots )+ (\text{elements in $C^{\geq k+1}_\p(\h)$}).\]
Collecting the elements of $C^k_\p(\h)$ from both sides of the equality gives that $m_k = \partial_\p(\phi_{k-1})$.
\\\\
(b) By Proposition \ref{KadResults} (b) there exists a $\p_\infty$-algebra $H_\beta = (H,0,m_2,m_3',m_4',\dots)$ such that
\[(\id,0,\dots,0,\phi_{k-1},0,\dots)\colon    H_\alpha \to H_\beta \]
is a $\p_\infty$-algebra isomorphism. By Proposition \ref{KadResults} (a) we have that
\[
 (m_k+m_{k+1}+\dots)- (m_3'+m_4'+\dots) = \partial_\p(\phi_{k-1})+(\text{elements in $C^{\geq k+1}_\p(\h)$})
\]
We see from the equality that $m'_3,\dots, m'_{k-1}$ vanish. We also see that $m_k-m_k' = \partial_\p(\phi_{k-1})$, giving that $m_k'=0$. This completes the proof.
\end{proof}

\section{Proof of Theorem \ref{mainthmone}}
We used the language of operadic cohomology in the obstruction theory for formality  in the previous section. We will compare different cohomology theories corresponding to different operads in order to compare the concept of formality in different categories. Recall that $H_{\mathscr Ass}^*$ and $H_{\mathscr Lie}^*$ correspond to  the Hochschild and the Chevalley-Eilenberg  cohomologies respectively. The Hochschild cochain complex of an associative algebra $A$ with coefficients in $A$ will be denoted by $C_{Hoch}^*(A)$ and its cohomology will be denoted by  $HH^*(A)$. The Chevalley-Eilenberg cochain complex of a Lie algebra $L$ with coefficients in $L$ will be denoted by $C_{CE}^*(L)$  and its cohomology will be denoted by $H_{CE}^*(L)$. We will also work with the Chevalley-Eilenberg cochain complex of a Lie algebra with coefficients in a left $L$-module $M$ different from $L$, which will be denoted by $C_{CE}^*(L,M)$ and its cohomology will be denoted by $H_{CE}^*(L,M)$.

\subsection*{Hochschild and Chevalley-Eilenberg Cohomology}
Recall that the universal enveloping algebra  $UL$ of a dg Lie algebra $L$ is explicitly given by 
\[UL = T^*_a(L)/(ab-(-1) ^{|a||b|}ba-[a,b],\quad a,b\in L).\]
A Lie algebra $L$ is of course a left module over itself by $g.h = [g,h]$.

Let $UL^{ad}$ denote the left $L$-module structure on $UL$ given by $g.m = g\ot m -(-1)^{|g||m|}m\ot g$, for $g\in L$ and $m\in UL$ (where $m$ is of some homogenous degree $|m|$). This makes the inclusion $L\hookrightarrow UL^{ad}$ a map of left $L$-modules.

\begin{lemma}[\text{\cite[Lemma 3.3.3.]{lodaycyclic}}] \label{alt}
There exists a cochain map 
\[ Alt \colon    C^*_{Hoch}(UL)\to C^*_{CE}(L,UL^{ad})\] 
from the Hochschild cochain complex of $UL$ to the Chevalley-Eilenberg cochain complex of $L$ with coefficients in $UL^{ad}$. If $f\in C^n_{Hoch}(UL) = \Hom_{\k}(UL^{\ot n},UL)$, then $Alt(f)\in C_{CE}^n(L,UL^{ad}) = \Hom_{\k}(L^{\wedge n},UL^{ad})$ is given by 
\[Alt(f)(l_1\wedge\cdots\wedge l_n) = \sum_{\sigma\in S_n} \gamma(\sigma,l_1,\dots,l_n)f(l_{\sigma^{-1}(1)}\ot\cdots\ot l_{\sigma^{-1}(n)}) \]
where $\gamma(\sigma;l_1,\dots,l_n)$ is the product of the sign of $\sigma$ and the Koszul sign obtained by applying $\sigma$ on $l_1\cdots l_j$.
\end{lemma}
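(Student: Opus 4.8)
The plan is to verify directly that the explicit antisymmetrization formula lands in the Chevalley-Eilenberg complex and that it commutes with the two differentials. First I would check that $Alt(f)$ genuinely defines an element of $C^n_{CE}(L,UL^{ad}) = \Hom_\k(L^{\wedge n},UL^{ad})$. Since the defining sum runs over all of $S_n$ weighted by $\gamma(\sigma;l_1,\dots,l_n)$, the map $Alt(f)\colon L^{\ot n}\to UL^{ad}$ is automatically alternating in the graded (Koszul) sense, so it factors through $L^{\wedge n}$; no further argument is needed at this stage.

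The substance of the lemma is the chain-map identity $Alt\circ\partial_{Hoch} = \partial_{CE}\circ Alt$. To prove it I would write out both differentials explicitly. For $f\in C^n_{Hoch}(UL)$ the Hochschild differential of $f$ (an element of $C^{n+1}_{Hoch}$) has an \emph{outer} pair of terms, namely left multiplication by $a_0$ and right multiplication by $a_{n}$ with the appropriate Koszul sign, together with $n$ \emph{inner} terms of the form $f(\dots,a_i a_{i+1},\dots)$ obtained by multiplying adjacent arguments inside $UL$. On the other side, the Chevalley-Eilenberg differential of $Alt(f)$ consists of the adjoint-action terms $\sum_i (-1)^i\, l_i.\,Alt(f)(l_0\wedge\cdots\widehat{l_i}\cdots\wedge l_n)$ and the bracket terms $\sum_{i<j}(-1)^{i+j}\,Alt(f)([l_i,l_j]\wedge l_0\wedge\cdots\widehat{l_i}\cdots\widehat{l_j}\cdots\wedge l_n)$.

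The comparison then splits into two independent matchings. The outer Hochschild terms, after being summed over $S_{n+1}$ with the signs $\gamma(\sigma)$, reorganize into the adjoint-action terms of $\partial_{CE}$: this is exactly where the module structure $UL^{ad}$, defined by $g.m = g\ot m - (-1)^{|g||m|} m\ot g$, is forced upon us, since for each fixed ``missing'' index $l_i$ the left-multiplication and right-multiplication contributions combine into a single graded commutator, which is the definition of the action. The inner Hochschild terms match the bracket terms of $\partial_{CE}$: when the product $a_i a_{i+1}$ of two elements of $L\subset UL$ is summed over all permutations with the alternating signs, its graded-symmetric part is annihilated and only its graded-antisymmetric part survives, and by the defining relation $ab - (-1)^{|a||b|}ba = [a,b]$ of $UL$ this antisymmetric part is precisely the Lie bracket $[l_i,l_{i+1}]$, reproducing the Chevalley-Eilenberg bracket summand.

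I expect the main obstacle to be the sign and combinatorial bookkeeping in the second matching: one must check that the Koszul signs $\gamma(\sigma)$ combine correctly with the positional signs $(-1)^i$ and $(-1)^{i+j}$, and — crucially — that the graded-symmetric contributions of each adjacent product cancel in pairs after antisymmetrization, while the combinatorial multiplicity of permutations collapsing a pair of adjacent indices matches the single bracket summand of $\partial_{CE}$. Once these signs are pinned down, the identification of the surviving antisymmetric product with the bracket via the defining relation of $UL$ is immediate, and the two differentials agree term by term, yielding the desired cochain map.
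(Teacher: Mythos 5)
The paper does not actually prove this lemma --- it is imported verbatim from Loday's \emph{Cyclic Homology} (Lemma 3.3.3), and the proof there is exactly the direct verification you outline: the antisymmetrized map is manifestly alternating, the two outer Hochschild terms (left and right multiplication by the distinguished argument) assemble into the graded commutator that is precisely the $UL^{ad}$-action, and the inner terms collapse in pairs to the bracket via the defining relation $ab-(-1)^{|a||b|}ba=[a,b]$ of $UL$, with the $2\cdot n!$ adjacent-pair terms matching the $n!$ terms of the Chevalley--Eilenberg bracket summand. Your proposal is therefore correct and follows essentially the same route as the cited proof; all that remains is the sign bookkeeping you already flag as the main labor.
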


By the map above we have a tool for comparison of cohomology classes in $HH^*(UL)$ and $H_{CE}^*(L,UL^{ad})$. However, the obstruction theory for formality in $\dgl$ was expressed in terms of cohomology classes in $H^*_{CE}(L)$ (i.e  $H^*_{CE}(L,L)$). In the next proposition we show that the inclusion $L\hookrightarrow UL^{ad}$ of left $L$-modules induces an injection $H^*_{CE}(L,L)\to H_{CE}^*(L,UL^{ad})$ in cohomology.

\begin{prop}\label{lie-mod-incl}
The inclusion of $L$-modules $L\hookrightarrow UL^{ad}$ induces an injection\break $H^*_{CE}(L,L)\to H_{CE}^*(L,UL^{ad})$ in cohomology.
\end{prop}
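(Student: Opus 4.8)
The plan is to realize $L$ as a direct summand of $UL^{ad}$ in the category of dg left $L$-modules; a module retraction then automatically splits the map induced on cohomology, forcing injectivity. The essential input will be the Poincar\'e--Birkhoff--Witt symmetrization map and the fact that, over a field of characteristic zero, it is equivariant for the adjoint action.

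First I would introduce the symmetrization map. Over $\k$ the linear isomorphism
\[ \omega\colon \Lambda L \to UL, \qquad \omega(x_1\cdots x_p)=\tfrac{1}{p!}\sum_{\sigma\in S_p}\gamma(\sigma;x_1,\dots,x_p)\,x_{\sigma^{-1}(1)}\cdots x_{\sigma^{-1}(p)}, \]
(where on the right the products are taken in $UL$) is the classical symmetrization. The key claim is that $\omega$ is an isomorphism of dg $L$-modules, where $L$ acts on the symmetric algebra $\Lambda L$ by the unique degree-$0$ derivation extending the adjoint action $g\cdot l=[g,l]$, and on $UL$ by the adjoint action that defines $UL^{ad}$. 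Equivariance is a direct (if sign-delicate) computation, and $\omega$ commutes with the internal differentials since both are the derivation extensions of $d_L$ and $\omega$ is natural in $L$.

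Next I would use the word-length decomposition $\Lambda L=\bigoplus_{p\ge 0}\Lambda^p L$. Both the adjoint derivation action and the internal differential $d_L$ preserve word length (each applies $d_L$ to a single factor), so each $\Lambda^p L$ is a dg $L$-submodule; in particular $\Lambda^1 L=L$ carries exactly the adjoint module structure and $\Lambda^0 L=\k$ is trivial. Transporting along $\omega$ gives a decomposition $UL^{ad}\cong\bigoplus_{p\ge 0}\Lambda^p L$ of dg $L$-modules in which the summand $\Lambda^1 L$ maps, via $\omega$, precisely to the inclusion $\iota\colon L\hookrightarrow UL^{ad}$ (since $\omega(l)=l$ for $l\in L$). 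Defining $r\colon UL^{ad}\to L$ as $\omega^{-1}$ followed by the projection onto $\Lambda^1 L=L$ then yields a morphism of dg $L$-modules with $r\circ\iota=\id_L$.

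Finally, since $C^*_{CE}(L,-)$ is covariantly functorial in the coefficient module, the identity $r_*\circ\iota_* = (r\circ\iota)_* = \id$ holds on $C^*_{CE}(L,L)$, and passing to cohomology gives $H(r)\circ H(\iota)=\id$ on $H^*_{CE}(L,L)$. Hence $H(\iota)\colon H^*_{CE}(L,L)\to H_{CE}^*(L,UL^{ad})$ is a split injection, as claimed. I expect the main obstacle to be verifying the $L$-equivariance of $\omega$ in the graded/dg setting, i.e.\ tracking the Koszul signs and checking compatibility with $d_L$; this is the only genuinely nontrivial point, and it is exactly the classical characteristic-zero PBW fact that the adjoint representation is a direct summand of the regular one.
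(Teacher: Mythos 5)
Your proposal is correct and takes essentially the same route as the paper: both exhibit $L$ as a direct summand of $UL^{ad}$ in $L$-modules via the PBW symmetrization $\Lambda L \xrightarrow{\cong} UL^{ad}$ (the paper phrases the $L$-action on $\Lambda L$ through the Poisson bracket $\{g,-\}$, which is exactly the derivation extension of the adjoint action you use), and then split the induced map on cohomology by functoriality of $C^*_{CE}(L,-)$ in the coefficients. The only difference is that the paper outsources the equivariance of the PBW map to \cite[Lemma 3.3.5]{lodaycyclic} rather than verifying it directly, so the ``sign-delicate computation'' you flag as the main obstacle is precisely the cited result.
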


\begin{proof}
We start by recalling the Poisson algebra structure on $\Lambda_a L$ (see \cite[Section 3.3.4]{lodaycyclic}). The Poisson bracket $\{-,-\}$ on $\Lambda_a L$ is  determined by the following two properties: i) $\{g,h\} = [g,h]$ for $g,h\in L$ and ii) $\{-,-\}$ is a derivation in each variable. Now we may give $\Lambda L$ a left $L$-module structure given by $g.\alpha = \{g,\alpha\}$. With this $L$-module structure, the Poincaré-Birkhoff-Witt isomorphism $\eta\colon    \Lambda L\to UL^{ad}$ is an $L$-module morphism (\cite[Lemma 3.3.5]{lodaycyclic}). In particular, $\Lambda L$ and $UL^{ad}$ are isomorphic as $L$-modules. Since $L$ is a direct summand of the $L$-module $\Lambda L$, it follows by the $L$-module isomorphism above that $L$ is also a direct summand of $UL^{ad}$. Hence there is a projection $UL^{ad}\twoheadrightarrow L$ of $L$-modules, and therefore $\id_L$ may be decomposed as $L\hookrightarrow UL^{ad}\twoheadrightarrow L$. This in turn gives a decomposition  $\id_{H^*_{CE}(L,L)} \colon    H^*_{CE}(L,L)\to H^*_{CE}(L,UL^{ad})\to H^*_{CE}(L,L)$. Thus,  $H^*_{CE}(L,L)\to H^*_{CE}(L,UL^{ad})$ must be injective.
\end{proof}

\subsection*{The proof}
In this section it will be necessary to be able to  distinguish between a dg Lie algebra $(L,\bar l_1, \bar l_2)$ and the underlying vector space $L$. Therefore we will denote the Lie algebra structure by $\cL$ and the underlying vector space by $L$. We will denote the Lie algebra structure on the cohomology of $\cL$ by $H(\cL)$ while its underlying vector space will be denoted by $H(L)$. We make the same distinction between $U\cL$ and $UL$.

\begin{lemma}[\text{\cite[Theorem 21.7]{felixrht}}]\label{quillen} Suppose $char(\k)=0$ and $\mathcal L\in \dgl$. Then there exists a natural isomorphism $UH(\mathcal L) \cong H(U\mathcal L)$ of algebras.
\end{lemma}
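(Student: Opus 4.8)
The plan is to produce a canonical morphism of graded algebras $\Phi\colon UH(\cL)\to H(U\cL)$ and to prove it is an isomorphism by a Poincaré--Birkhoff--Witt (PBW) filtration argument; naturality will be automatic. To build $\Phi$, observe that the canonical inclusion of dg Lie algebras $\cL\hookrightarrow U\cL$ (with the commutator bracket on $U\cL$) induces a morphism of graded Lie algebras $H(\cL)\to H(U\cL)$, and by the universal property of the enveloping algebra this extends uniquely to a morphism of graded algebras $\Phi\colon UH(\cL)\to H(U\cL)$. All the maps involved are natural in $\cL$, so once $\Phi$ is shown to be bijective the naturality of the isomorphism is immediate.

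Next I would filter $U\cL$ by the PBW (word-length) filtration $F_0\subseteq F_1\subseteq\cdots$. Its differential is the derivation extending the differential $d$ of $\cL$, hence does not raise word length and preserves $F_\bullet$; thus $H(U\cL)$ inherits a filtration and $\Phi$ is filtration-preserving. The central input is the symmetrization map $\eta$ from the proof of Proposition~\ref{lie-mod-incl}: viewing both $\Lambda L$ and $U\cL$ as cofree conilpotent cocommutative coalgebras with differentials the coderivations coextending $d$, one checks that $\eta$ is an isomorphism of complexes $(\Lambda L,\Lambda(d))\xrightarrow{\ \cong\ }(U\cL,d)$. It is filtered, and its associated graded is the classical PBW isomorphism, so $\mathrm{gr}\,U\cL\cong(\Lambda L,\Lambda(d))$ as dg algebras.

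Since $\Lambda(d)$ preserves each symmetric power $\Lambda^p L$, the complex $(\Lambda L,\Lambda(d))$ is a direct sum of subcomplexes, so its word-length spectral sequence degenerates; transporting along $\eta$, the (multiplicative) spectral sequence of $F_\bullet$ degenerates as well, and we obtain an isomorphism of graded algebras $\mathrm{gr}\,H(U\cL)\cong H(\Lambda L,\Lambda(d))\cong\Lambda H(L)$, using that in characteristic zero the symmetrization functor commutes with homology---the same fact invoked in the proof of Theorem~\ref{ubar}(b). On the other side, PBW for the graded Lie algebra $H(\cL)$ gives $\mathrm{gr}\,UH(\cL)\cong\Lambda H(L)$. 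Under these two identifications $\mathrm{gr}\,\Phi$ becomes an endomorphism of the free graded-commutative algebra $\Lambda H(L)$, and on the degree-one generators $H(L)$ it is the identity (there $\Phi$ reduces to the canonical map $H(\cL)\to\mathrm{gr}_1 H(U\cL)$). A graded-algebra endomorphism of a free graded-commutative algebra that is the identity on generators is the identity, so $\mathrm{gr}\,\Phi$ is an isomorphism; as $F_\bullet$ is exhaustive and bounded below, $\Phi$ is an isomorphism.

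I expect the main obstacle to be the compatibility of the chain-level and homology-level PBW structures: namely, verifying that $\eta$ is genuinely a filtered isomorphism of complexes and that the resulting spectral sequence collapses multiplicatively, so that $\mathrm{gr}\,\Phi$ is an algebra map between two honest copies of $\Lambda H(L)$. The cleanest way through it is to lean on the filtered chain isomorphism $\eta$ together with the characteristic-zero fact that $\Lambda(-)$ preserves quasi-isomorphisms, both of which are already available in the paper.
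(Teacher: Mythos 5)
Your proof is correct: constructing $\Phi\colon UH(\cL)\to H(U\cL)$ from the universal property, identifying $(\Lambda L,\Lambda(d))\xrightarrow{\;\cong\;}U\cL$ via the filtered symmetrization map $\eta$, using characteristic-zero exactness of $\Lambda(-)$ to get $\mathrm{gr}\,H(U\cL)\cong\Lambda H(L)$, and concluding by the associated-graded comparison over an exhaustive, bounded-below filtration are all sound steps. Be aware, though, that the paper offers no proof of this lemma at all---it is quoted verbatim from \cite[Theorem 21.7]{felixrht}---and your argument is essentially the standard proof given in that reference, assembled from ingredients the paper itself uses elsewhere (the PBW symmetrization $\eta$ in the proof of Proposition~\ref{lie-mod-incl} and the exactness of $\Lambda(-)$ invoked in Theorem~\ref{ubar}(b)), so what you have produced is a correct, self-contained replacement for the citation rather than an alternative to an argument in the paper.
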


It follows directly from the lemma that $U\colon   \dgl\to \dga$ preserves formality. Thus, what is left to show in order to prove Theorem \ref{mainthmone} is that whenever $U\cL$ is formal in $\dga$, then $\cL$ is formal in $\dgl$. In the language of $\ainf$ and $\linf$algebras, we need to prove the following:

\begin{thm}
Let $\k$ be a field of characteristic zero and let $\cL\in \dgl$. If there exists an $\ainf$quasi-isomorphism $U\cL\to H(U\cL)$ then there exists an $\linf$quasi-isomorphism $\cL\to H(\cL)$. 
\end{thm}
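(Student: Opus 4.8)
The plan is to transfer the whole problem to cohomology and then compare the two obstruction sequences supplied by Theorem \ref{ObstrThm}, using Baranovsky's $\ubar$ as the bridge between the Lie and associative sides. First I would apply the homotopy transfer theorem (Proposition \ref{htt}) to $\cL$ to obtain a minimal $\linf$model $(H(L),0,l_2,l_3,\dots)$, where $l_2$ is the bracket on $H(\cL)$; by Proposition \ref{htt} and the remark following it, $\cL$ is formal exactly when this model admits an $\linf$quasi-isomorphism to the strict Lie algebra $(H(L),0,l_2)=H(\cL)$. Arguing by contradiction, suppose $\cL$ is not formal. Then Theorem \ref{ObstrThm} (for $\p=\mathscr Lie$) produces a first non-vanishing obstruction; equivalently, after repeatedly applying Proposition \ref{hejhej}\ref{hejhejb} to kill the cohomologically trivial classes, I may assume the model has reduced form $(H(L),0,l_2,0,\dots,0,l_k,l_{k+1},\dots)$ with $l_3=\dots=l_{k-1}=0$ and $[l_k]\neq 0$ in $H^*_{CE}(H(\cL),H(\cL))$.

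The key step is to transport this reduced model across $\ubar$ and to observe that it stays reduced. Applying $\ubar$ and using Theorem \ref{ubar}(a),(b) yields a minimal $\ainf$model $(\Lambda H(L),0,m_2,m_3,\dots)$ of $U\cL$, with $\Lambda H(L)\cong H(U\cL)\cong UH(\cL)$ (Lemma \ref{quillen}) and $m_2$ the product of $UH(\cL)$. Now I would compare with the \emph{strict} dg Lie algebra $(H(L),0,l_2)$: by Theorem \ref{ubar}(e) its image $\ubar(H(L),0,l_2)=UH(\cL)$ is an ordinary algebra with all higher products zero, and since the two $\linf$structures agree through arity $k-1$, Theorem \ref{ubar}(c) forces $m_3=\dots=m_{k-1}=0$ for our model as well. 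Thus $\ubar$ of the reduced Lie model is itself in reduced $\ainf$form $(\Lambda H(L),0,m_2,0,\dots,0,m_k,m_{k+1},\dots)$.

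It then remains to compare the two degree-$k$ cocycles. By Theorem \ref{ubar}(d) the operation $l_k$ is precisely the signed antisymmetrization of $m_k$, that is $l_k=Alt(m_k)$ as a cochain in $C^*_{CE}(H(\cL),UH(\cL)^{ad})$, where $l_k$ is viewed through the module inclusion $H(L)\hookrightarrow UH(\cL)^{ad}$. Since $U\cL$ is formal, the reduced model $(\Lambda H(L),0,m_2,0,\dots,0,m_k,\dots)$ is $\ainf$quasi-isomorphic to $(\Lambda H(L),0,m_2)$, so Proposition \ref{hejhej}\ref{hejheja} gives $[m_k]=0$ in $HH^*(UH(\cL))$. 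Because $Alt$ is a cochain map (Lemma \ref{alt}), the image of $[l_k]$ under the injection $H^*_{CE}(H(\cL),H(\cL))\hookrightarrow H^*_{CE}(H(\cL),UH(\cL)^{ad})$ of Proposition \ref{lie-mod-incl} equals $Alt_*[m_k]=0$; injectivity then forces $[l_k]=0$, contradicting $[l_k]\neq 0$. Hence every Lie obstruction vanishes and $\cL$ is formal.

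The main obstacle is the middle step. A priori $\ubar$ of a reduced $\linf$model need not be reduced, since by Theorem \ref{ubar}(c) each $m_j$ also depends on $l_2$; and without reduced $\ainf$form the cochain $m_k$ is not a Hochschild cocycle, so the two obstruction classes cannot be matched at all. The resolution is the comparison with the strict enveloping algebra, which pins the intermediate $m_j$ to zero. A second delicate point is purely coefficient-theoretic: $l_k$ compares to $m_k$ only after the Lie coefficients are enlarged from $H(\cL)$ to $UH(\cL)^{ad}$, so the injectivity of Proposition \ref{lie-mod-incl} is exactly what is needed to pull the vanishing back into $H^*_{CE}(H(\cL),H(\cL))$.
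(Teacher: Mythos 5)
Your proposal is correct and takes essentially the same route as the paper's proof: homotopy transfer to a minimal $\linf$model, Theorem \ref{ubar}(c),(e) (comparison with the strict enveloping algebra) to see that $\ubar$ preserves the reduced form, Proposition \ref{hejhej}(a) to get vanishing of the Hochschild class, and Lemma \ref{alt} together with the injectivity of Proposition \ref{lie-mod-incl} to pull that vanishing back to $H^*_{CE}(H(\cL),H(\cL))$. The only difference is organizational: you argue by contradiction at the first non-vanishing obstruction, whereas the paper inductively shows every class $[l_3],[l_4'],\dots$ vanishes and then invokes Theorem \ref{ObstrThm}; the two formulations are logically equivalent.
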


\begin{proof}Let $\mathcal L$ be on the form $\cL = (L,\bar l_1,\bar l_2)$ and let 
$U\mathcal L = (UL,\bar m_1,\bar m_2)$ be its  universal enveloping algebra. 

By the homotopy transfer theorem for $\linf$algebras (Proposition \ref{htt}) there exists an $\linf$algebra
$(H^*(L),0,l_2,l_3,\dots)$ and an $\linf$quasi-isomorphism 
\begin{equation*}
\phi\colon   \mathcal L\to(H(L),0,l_2,l_3,\dots).\end{equation*}
Applying $\ubar$ to $\phi$ gives an $\ainf$quasi-isomorphism
\begin{equation}\label{ubar-phi}\ubar(\phi) \colon    \ubar(\mathcal L)\to \ubar(H(L),0,l_2,l_3,\dots)\end{equation}
(recall from \hyperref[ubar]{Theorem \ref*{ubar} (b)} that $\ubar$ preserves quasi-isomorphisms). By \hyperref[ubar]{Theorem \ref*{ubar} (e)}, $\ubar(\mathcal L)$ is the ordinary universal enveloping algebra $U\mathcal L$ of $\mathcal L$. Let us analyse the $\ainf$structure of $\ubar(H^*(L) ,0,l_2,l_3,\dots)$.
\\\\
 \textit{\textbf{Claim:} $\ubar(H(L),0,l_2,l_3,\dots)$ is an $\ainf$algebra  $(H(UL),0,m_2,m_3,\dots)$, whose 2-truncation, $(H(UL),0,m_2)$, is  isomorphic to the cohomology algebra $H(U\mathcal L)$ of the universal enveloping algebra $U\mathcal L$.}
\begin{proof} Since $\ubar|_{\dgl} = U$, the following holds:  \begin{equation*}\label{UswitchH}\ubar(H(L),0,l_2) = UH(\mathcal L)
\overset{\text{Lemma \ref{quillen}}} = H(U\mathcal L)=H(UL,\bar m_1,\bar m_2)=(H(UL),0,m_2)\end{equation*}
Now it follows by \hyperref[ubar]{Theorem \ref*{ubar} (c)}  that
\[\ubar(H(L),0,l_2,l_3,\dots)= (H(UL),0,m_2,m_3,\dots).\]
\end{proof}

Thus the quasi-isomorphism in \eqref{ubar-phi} is a map of the form
$$\ubar(\phi)\colon    (UL,\bar m_1,\bar m_2)\to (H^*(UL),0,m_2,m_3,\dots)
$$
Since $U\cL$ is formal, it follows  that $(H(UL),0,m_2,m_3,\dots)$ is $\ainf$quasi-isomorphic to $H(U\cL )=(H(UL),0,m_2)$. It follows by \hyperref[hejhej]{Proposition \ref*{hejhej} (a)} that  $[m_3] =0$ in $HH^*(UH(\mathcal L))$. 

Let $Alt^*\colon    HH^*(UH(\mathcal L))\to H^*_{CE}(H(\mathcal L),(UH(\mathcal L))^{ad})$ be the cohomology-induced map of the cochain map $Alt$ introduced in \hyperref[alt]{Lemma \ref*{alt}} and let $j^*\colon    H^*_{CE}(H(\mathcal L),H(\mathcal L))\to H^*_{CE}(H(\mathcal L),(UH(\mathcal L))^{ad})$ be the cochain map induced by the inclusion $H(\mathcal L)\hookrightarrow (UH(\mathcal L))^{ad}$. We have by Theorem \ref{ubar} (d) that $Alt^*[m_3] = j^*[l_3]$. Since $[m_3] = 0$, it follows that $j^*[l_3]= 0$. By Proposition \ref{lie-mod-incl}, $j^*$ is injective, and hence $[l_3]=0$ in $H^*_{CE}(H(\cL),H(\cL))$. 

Since $[l_3]=0$, it follows by Proposition \ref{hejhej} (b), that there exists a quasi-isomorphism 
$$\alpha\colon    (H(L),0,l_2,l_3,\dots)\to (H(L),0,l_2,0,l_4',l_5',\dots)$$

Applying Theorem \ref{ubar} (c) on $\ubar(H(L),0,l_2,0,l_4',l_5',\dots)$ and $\ubar(H(L),0,l_2,0,\dots)$ we get that  
\[\ubar(H(L),0,l_2,0,l_4',l_5',\dots)=(H^*(UL),0,m_2,0,m_4',m_5',\dots).\]

Note that $(H(UL),0,m_2,0,m_4',m_5',\dots)$ is $\ainf$quasi-isomorphic to $(H(UL),0,m_2)$ (since $\ubar(\alpha)$ is a quasi-isomorphism (Theorem \ref{ubar} (b)) connecting $(H(UL),0,m_2,m_3,\dots)$ and $(H(UL),0,m_2,0,m_4',m_5',\dots)$). Again, by Proposition \ref{hejhej} (a) it follows that $[m_4']=0$ in $HH^*(UH(\cL))$. The same reasoning as before will give us that $[l_4']=0$ in $C^*_{CE}(H(\cL))$. 

Continuing this process will yield a sequence
\[ [l_3], [l_4'],\dots, [l_n^{(n-3)}],\dots\]
of vanishing Chevalley-Eilenberg cohomology classes. By Theorem \ref{ObstrThm}, it follows that $(H(L),0,l_2,l_3,\dots)$ is $\linf$quasi-isomorphic to $(H(L),0,l_2)$, which is equivalent to the $\dgl$-formality of $\cL = (L,l_1,l_2)$.
\end{proof}

\section{Proof of Theorem \ref{mainthmtwo}}
We will compare the cohomology theories $H_{\mathscr Ass}^*$ and $H_{\mathscr Com}^*$, which correspond to the Hochschild and the Harrison cohomologies respectively, in order to compare the concept of formality in $\dga$ and $\cdga$. We will denote the Harrison cochain complex and the Harrison cohomology of a commutative dg algebra $A$ with coefficients in $A$ by $C_{Harr}^*(A)$ and $Harr^*(A)$ respectively.

\subsection*{Hochschild and Harrison Cohomology}
We will start by recalling the notion of shuffle products. A permutation $\sigma\in S_{p+q}$ is called a $(p,q)$-shuffle if $\sigma(1)<\cdots <\sigma(p)$ and $\sigma(p+1)<\cdots<\sigma(p+q)$. Let $\mu_{p,q}\in\k[S_{p+q}]$ be given by 
\[\mu_{p,q} = \sum_{(p,q)\text{-shuffles}}sgn(\sigma)\sigma\]

There is an action of $\k[S_n]$ on $A^{\ot n}$ given by \[\sigma.(a_1\cdots a_n) = \epsilon(\sigma;a_1,\dots,a_n)a_{\sigma^{-1}(1)}\cdots a_{\sigma^{-1}(n)}\] for $\sigma\in S_n$ and where $\epsilon(\sigma;a_1,\dots,a_n)$ is 
the Koszul sign obtained when applying $\sigma$ on $a_1\cdots a_n$. The shuffle product $\bar \mu_{p,q}\colon    A^{\ot p}\ot A^{\ot q}\to A^{\ot(p+q)}$ is given by letting $\mu_{p,q}$ act on $A^{\ot p}\ot A^{\ot q}\cong A^{\ot(p+q)}$.

We will now see how this is related to Harrison cohomology. We have that 
\[C_{Harr}^n(A) \cong C_{\mathscr Com}^n(A) \cong \Hom_{\k}(\mathscr Com^!(n)\ot_{S_n} A^{\ot n},A)\cong \Hom_{\k}(\mathscr Lie(n) \ot_{S_n} A^{\ot n},A). \] Over a field of characteristic zero one can show that  
$\Hom_{\k}(\mathscr Lie(n) \ot_{S_n} A^{\ot n},A)$  is isomorphic to the space of $\k$-morphisms $A^{\ot n}\to A$ that vanish on all shuffle products $\bar\mu_{k,n-k}\colon    A^{\ot k}\ot A^{\ot (n-k)}\to A^{\ot n}$ (see \cite[Sections 1.3.3, 13.1.7.]{lodayoperad}) In particular that means that there exists an inclusion 
$$\iota\colon    C^*_{Harr}(A)\hookrightarrow C^*_{Hoch}(A)\cong\Hom_{\k}(A^{\ot n},A).$$

This inclusion induces a map $\iota^*\colon    Harr^*(A)\to HH^*(A)$ in the cohomology. Over a field $\k$ of characteristic zero Barr showed in \cite{barr68} that $\iota^*$ is injective.
\begin{prop}[\cite{barr68}]\label{barr-thm}
 Let $\k$ be a field of characteristic zero, and let $A$ be a commutative dg algebra over $\k$. The map
$ \iota^*\colon    Harr^*(A)\to HH^*(A)$ induced by the inclusion $\iota\colon    C^*_{Harr}(A)\to C^*_{Hoch}(A)$ is injective.
\end{prop}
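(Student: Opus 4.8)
The plan is to realise $C^*_{Harr}(A)$ as a direct summand \emph{subcomplex} of $C^*_{Hoch}(A)$; a split inclusion of cochain complexes induces a split injection on cohomology, which is exactly the assertion. The whole argument is the cochain-level shadow of the Hodge (or $\lambda$-) decomposition of Hochschild (co)homology of a commutative algebra.

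First I would recall the Eulerian idempotents $e_n^{(1)},\dots,e_n^{(n)}\in\k[S_n]$, which exist precisely because $\charac\k=0$. They are mutually orthogonal idempotents summing to the identity, and they induce a decomposition $A^{\ot n}=\bigoplus_{i=1}^n e_n^{(i)}\cdot A^{\ot n}$. Dually, since $\mathscr Ass(n)=\k[S_n]$ is the regular representation and $C^n_{Hoch}(A)\cong\Hom_\k(\mathscr Ass(n)\ot_{S_n}A^{\ot n},A)\cong\Hom_\k(A^{\ot n},A)$, the idempotents split each $C^n_{Hoch}(A)$ into a direct sum $\bigoplus_{i=1}^n C^n_{(i)}(A)$ of graded subspaces. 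The first summand, corresponding to $e_n^{(1)}$, is identified with $\Hom_\k(\mathscr Lie(n)\ot_{S_n}A^{\ot n},A)$ — concretely the cochains that vanish on all shuffle products — and is thus precisely $\iota(C^*_{Harr}(A))$. This is the operadic incarnation of the Poincaré–Birkhoff–Witt splitting $\mathscr Ass\cong\mathscr Com\circ\mathscr Lie$, with $\mathscr Lie(n)\hookrightarrow\mathscr Ass(n)$ appearing as the first, ``primitive'' eigenspace (the image of the Dynkin idempotent), complementary to the span of the shuffles.

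The heart of the argument, and the step I expect to be the main obstacle, is to check that this eigenspace decomposition is compatible with the Hochschild codifferential, so that each $C^*_{(i)}(A)$ is a subcomplex and $C^*_{Hoch}(A)=\bigoplus_{i=1}^n C^*_{(i)}(A)$ is a genuine decomposition of cochain complexes. This is where both commutativity of $A$ and $\charac\k=0$ are essential: one must verify that the Hochschild coface and codegeneracy operations interact with the shuffle action so that the Eulerian projectors commute with the differential $\partial$ (the Gerstenhaber–Schack/Loday compatibility). Once this is in hand, the projection $\pi\colon C^*_{Hoch}(A)\to C^*_{Harr}(A)$ onto the first summand is a cochain map with $\pi\circ\iota=\id_{C^*_{Harr}(A)}$.

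Finally, passing to cohomology yields $\pi^*\circ\iota^*=\id_{Harr^*(A)}$, so $\iota^*\colon Harr^*(A)\to HH^*(A)$ is split injective, which proves the proposition. I note that injectivity alone does not require identifying the complementary summands $C^*_{(i)}(A)$ for $i\geq2$; it suffices to produce the single retraction $\pi$, so the essential input is exactly the compatibility of the first Eulerian projector with $\partial$.
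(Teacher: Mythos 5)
Your proposal is correct and is essentially the paper's own argument: the paper likewise splits $C^*_{Hoch}(A)$ by an idempotent in $\k[S_n]$ compatible with the Hochschild coboundary (Barr's $e_n$, which is precisely $1-e_n^{(1)}$, the complement of your first Eulerian idempotent), realizes $C^*_{Harr}(A)$ as the resulting direct summand subcomplex, and deduces injectivity on cohomology from the splitting. The differential-compatibility you flag as the main obstacle is also the crux in the paper, where it is cited as a property of Barr's idempotents (property (iv)) rather than proved, just as you cite Gerstenhaber--Schack/Loday.
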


We will briefly explain the techniques used in the proof of the proposition above.
First, set $\mu_n = \sum_{i=1}^{n-1}\mu_{i,n-i}$. Next, Barr constructed a family of idempotents  $\{e_i\}_{i\geq 2}$, $e_n\in \k[S_n]$, that satisfies the following conditions
\begin{itemize}
\item[(i)] $e_n^2 = e_n$ (idempotent)
\item[(ii)] $e_n$ is a polynomial in $\mu_n$ (without any constant term)
\item[(iii)] $e_n\mu_{i,n-i} = \mu_{i,n-i}$,\quad $1\leq i \leq n-1$
\end{itemize}
Since $e_n \in \k[S_n]$, it defines an action on $C^n_{Hoch}(A)=\Hom(A^{\ot n},A)$ (by permuting the inputs). This allows us to formulate a fourth condition that $\{e_i\}$ satisfies:
\begin{itemize}
\item[(iv)] $\partial_{Hoch}e_n = e_{n+1}\partial_{Hoch}$\qquad ($\partial_{Hoch}$ is the Hochschild coboundary).
\end{itemize}
By conditions (ii) and (iii) there is an equality of ideals  $(e_n) = (\mu_{1,n-1},\mu_{2,n-2},\dots,\mu_{n-1,1})$. In particular, a map $\phi\in C^n_{Hoch}(A)=\Hom(A^{\ot n},A)$ vanishes by the action of $e_n$ if and only if it vanishes on all $\mu_{i,n-i}$ (which is equivalent to that $\phi\in C^*_{Harr}(A)$). 

Recall that  an endomorphism $\rho\colon    V\to V$ gives a decomposition $V = \ker(\rho)\oplus \im(\rho)$. If $\rho$ is an idempotent we have that $\rho(a,b)= (0,b)$. Applying this to $e_n$ (which defines an endomorphism on $C^n_{Hoch}(A)$), we get that 
$C^n_{Hoch}(A) = \ker(e_n)\oplus\im(e_n)$. Since $(e_n)  = (\mu_{1,n-1},\mu_{2,n-2},\dots,\mu_{n-1,1})$, it follows that $\ker(e_n) = C^n_{Harr}(A)$. Set $W^n(A)\colon   =\im(e_n)$ and $C^n_{Hoch}(A)$ is then decomposed as 
\begin{equation}\label{hhdecomp}C^n_{Hoch}(A) \cong C^n_{Harr}(A)\op W^n(A).\end{equation}

In order to show that $\iota^* \colon    Harr^*(A)\to HH^*(A)$ is injective, we have to show that if $x\in C^n_{Harr}(A)\subset C^n_{Hoch}(A)$ is a coboundary in $C^*_{Hoch}(A)$ then it is also a coboundary in $C^*_{Harr}(A)$. 
By \eqref{hhdecomp} we have that an element of the Harrison subcomplex may be represented by an element of the form $(x,0)\in C^n_{Harr}(A)\op W^n(A) \cong C^n_{Hoch}(A)$. Assume $(x,0)$  is a coboundary in $C^n_{Hoch}(A)$, meaning that there is some element $(y_1,y_2)\in  C^{n-1}_{Harr}(A)\op W(A) \cong  C^{n-1}_{Hoch}(A)$ such that 
$\partial_{Hoch}(y_1,y_2) = (x,0)$.  From property (iv) we get the following commutative diagram
$$
\xymatrix{(y_1,y_2)\ar[r]^{e_{n-1}}\ar[d]_{\partial_{Hoch}} &(0,y_2)\ar[d]^{\partial_{Hoch}}\\
(x,0)\ar[r]_{e_n}& (0,0).}$$
Now we see that $\partial_{Hoch}(y_1,0) = \partial_{Hoch}((y_1,y_2)-(0,y_2)) = (x,0)$, which proves that $(x,0)$ is also a boundary in $C^*_{Harr}(A)$. This proves that $\iota^*$ is injective. 

The idempotents $\{e_n\}$ can not be constructed over a field of characteristic $p>0$ and over such a field $\iota^*$ is not injective in general (see the example in Section 4 in \cite{barr68}).

We would like to remark that the overview above is related to the subject of the $\lambda$-decomposition of Hochschild homology (see \cite[Section 4.5]{lodaycyclic}).

\subsection*{The proof}
As mentioned in the introduction, it is obvious that $\cdga$-formality implies $\dga$-formality. Hence what is left to show in order to prove Theorem \ref{mainthmtwo} is that if a cdga is formal as a dga, then it is also formal as a cdga.

\begin{proof}[Proof of Theorem \ref{mainthmtwo}]
Let $(C,\bar m_1,\bar m_2)$ be a cdga that is formal in $\dga$. Let $\mathcal H = (H(C),0,m_2)$ be the induced commutative graded algebra structure on the cohomology of $(C,\bar m_1,\bar m_2)$. The homotopy transfer theorem for $\cinf$algebras (see Proposition \ref{htt}) gives that there exists a $\cinf$algebra $(H(C),0,m_2,m_3,\dots)$ equipped with a $C_\infty$-quasi-isomorphism 
\[(C,\bar m_1,\bar m_2)\to (H(C),0,m_2,m_3,\dots).\]
Since $C$ is formal in $\dga$ there exists an $\ainf$quasi-isomorphism 
\[(H(C),0,m_2,m_3,\dots)\to (H(C),0,m_2).\]
It follows by \hyperref[hejhej]{Proposition \ref*{hejhej} (a)},  that $[m_3]_{Hoch}= 0$ in $HH^*(\h)$. Since the cohomology map $\iota^*\colon    Harr^*(A)\to HH^*(A)$ induced by the inclusion $\iota\colon    C_{Harr}^*(H(C))\hookrightarrow C_{Hoch}^*(H(C))$ is injective (Propostion \ref{barr-thm}), it follows that $[m_3]_{Harr}=0$ in $Harr^*(\h)$. Now by \hyperref[hejhej]{Proposition \ref*{hejhej} (b)} it follows that there exists a $\cinf$quasi-isomorphism
\[(H(C),0,m_2,m_3,\dots)\to(H(C),0,m_2,0,m_4',m_5',\dots).\]
Applying \hyperref[hejheja]{Proposition \ref*{hejhej} \ref*{hejheja}} again, gives that $[m_4']_{Hoch}=0$ in $HH^*(\h)$, wich in turn gives together with the injectivity of $\iota^*$ that $[m_4']_{Harr}=0$ in $Harr^*(\h)$. Continuing this process will yield a sequence \[[m_3]_{Harr}, [m_4']_{Harr},\dots,[m^{(n-3)}_n]_{Harr},\dots\] of vanishing Harrison cohomology classes in $Harr^*(\h)$. By \hyperref[ObstrThm]{Theorem \ref*{ObstrThm}}, it follows that $(H(C),0,m_2, m_3,\dots)$ is $\cinf$quasi-isomorphic to $(H(C),0,m_2)$, which is equivalent to the $\cdga$-formality of $(C,\bar m_1,\bar m_2)$. 
\end{proof}

\begin{appendices}

\section{Some technicalities concerning $\ainf$, $\cinf$ and $\linf$algebras} 
Given  a Koszul operad $\p$ there are many equivalent ways of viewing a  $\p_\infty$-algebra structure 
 on a vector space $A$.

 \begin{thm}[{\cite[Theorem 10.1.13]{lodayoperad}}] 
 Let $\p$ be a Koszul operad. Then a $\p_\infty$-algebra structure on a vector space $A$ is the same thing 
as coderivation on the cofree $\p^!$-coalgebra on $sA$, denoted by $\Vpd^*(sA)$ and a morphism of $\p_\infty$-algebras is the same thing as a morphism of cofree $\p^!$-coalgebras \end{thm}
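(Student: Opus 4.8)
The plan is to recognize this as the operadic ``Rosetta Stone'' of Koszul duality and to prove it by routing both sides through a common description as twisting (Maurer--Cartan) morphisms. Recall that $\p_\infty=\Omega\p^{\ashriek}$ is by definition the cobar construction on the Koszul dual cooperad $\p^{\ashriek}$, whose arity-wise linear dual is a (de)suspension of the Koszul dual operad $\p^!$; over a field of characteristic zero and for an operad of finite type this dualization is harmless, so the cofree $\p^!$-coalgebra $\Vpd^*(sA)$ may be identified with the cofree conilpotent $\p^{\ashriek}$-coalgebra $\bigoplus_n\p^{\ashriek}(n)\ot_{S_n}(sA)^{\ot n}$. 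First I would record that a $\p_\infty$-algebra structure on $A$ is the same as an operad morphism $\Omega\p^{\ashriek}\to\mathrm{End}_A$, and that by the universal property of the cobar construction these are in bijection with the twisting morphisms $\Tw(\p^{\ashriek},\mathrm{End}_A)$, i.e. the Maurer--Cartan elements of the convolution (pre-)Lie algebra $\Hom_{\mathbb S}(\p^{\ashriek},\mathrm{End}_A)$.

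Next I would set up the coalgebra side through the universal property of the cofree coalgebra. A coderivation $D$ of $\Vpd^*(sA)$ is uniquely determined by its corestriction $\mathrm{pr}\circ D\colon \Vpd^*(sA)\to sA$ onto the cogenerators; this identifies coderivations with collections of maps $\p^{\ashriek}(n)\ot_{S_n}(sA)^{\ot n}\to sA$, which under the suspension and the duality above is exactly a family $b_n\colon \p^!(n)\ot_{S_n}A^{\ot n}\to A$ of the degrees prescribed in the definition of a $\p_\infty$-algebra. Thus the underlying data on the two sides match, and what remains is to match the conditions.

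The heart of the argument is the equivalence between the codifferential condition $D^2=0$ and the defining relations of a $\p_\infty$-algebra. Since the commutator of two coderivations is again a coderivation, $D^2=\tfrac12[D,D]$ is a coderivation, and hence vanishes if and only if its corestriction $\mathrm{pr}\circ D^2$ does; this collapses an a priori infinite system of identities into a single one, which is precisely the Maurer--Cartan equation for the twisting morphism associated to $D$ in $\Hom_{\mathbb S}(\p^{\ashriek},\mathrm{End}_A)$. Together with the first paragraph this yields the bijection between $\p_\infty$-algebra structures and square-zero coderivations. For morphisms I would proceed identically: by the universal property a morphism of cofree $\p^!$-coalgebras $\Vpd^*(sA)\to\Vpd^*(sA')$ is uniquely determined by its corestriction, a family $\p^{\ashriek}(n)\ot_{S_n}(sA)^{\ot n}\to sA'$, and the requirement that it commute with the codifferentials $D$ and $D'$ unfolds, after projecting onto cogenerators, into exactly the relations defining an $\infty$-morphism of $\p_\infty$-algebras.

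The main obstacle is bookkeeping rather than conceptual: one must check that the two unfoldings --- of $D^2=0$ and of the intertwining relation $f\circ D=D'\circ f$ --- reproduce the $\p_\infty$-relations term by term, with all Koszul signs and the suspension $s$ correctly accounted for, and one must justify the passage between $\p^!$ and $\p^{\ashriek}$, which is exactly where the characteristic-zero and finite-type hypotheses enter. For the three operads $\p=\mathscr{Ass},\mathscr{Com},\mathscr{Lie}$ relevant to this paper these are the familiar verifications that define $\ainf$, $\cinf$ and $\linf$algebras, so the general statement specializes to the concrete descriptions used throughout.
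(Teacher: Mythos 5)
The paper offers no proof of this statement at all---it is quoted verbatim as Theorem 10.1.13 of Loday--Vallette, and your argument is exactly the standard ``Rosetta Stone'' proof given there: identify $\p_\infty$-algebra structures with twisting morphisms in the convolution algebra $\Hom_{\mathbb S}(\p^{\ashriek},\mathrm{End}_A)$ via the universal property of the cobar construction, identify square-zero coderivations on the cofree conilpotent coalgebra with Maurer--Cartan elements via corestriction to cogenerators, and match the two, treating morphisms the same way. Your proposal is correct and takes essentially the same approach as the paper's cited source.
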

 
We have that $\Vpd^*(sA) = \bigoplus_{n\geq 0} \Vpd^n(sA)$ where $\Vpd^n(sA) = {\p^!}^\vee(n)\otimes_{S_n} (sA)^{\ot n}$ (and where ${\p^!}^\vee$ denotes the cooperad obtained by dualising  $\p^!$). We say that an element of $\Vpd^n(sA)$ is of word-length $n$. 

We will briefly recall the correspondence between $\p_\infty$-algebras and quasi-free dg $\p^!$-coalgebras. A $\p^!$-coalgebra differential $d$ on $\Vpd^*(sA)$ may be decomposed as
\[ 
d = d_0+d_1+\dots,
 \]
where $d_i$ is the part of $d$ that decreases the word-length  by $i$. The dg coalgebra\break $(\Vpd^*(sA),d = d_0+d_1+\dots)$ corresponds to a $\p_\infty$-algebra $(A,b_1,b_2,\dots)$ where $d_i$ and $b_{i+1}$ encodes each other (i.e. $d_i$  may be constructed from $b_{i+1}$ and vice versa).

Analogously, a  morphism of  dg $\p^!$-coalgebras $\Psi\colon   (\Vpd^*(sA),d)\to (\Vpd^*(sA'),d')$ may be decomposed as $\Psi = \Psi_0+\Psi_1+\dots$, where $\Psi_i$ is the part of $\Psi$ that decreases the word-length by $i$. We have that $\Psi$ corresponds to a $\p_\infty$-quasi-isomorphism $\phi = (\phi_1,\phi_2,\dots)\colon    A\to A'$ where $\Psi_i$ and $\phi_{i+1}$ encodes each other. With this correspondence we have tools to prove some technical results that we  need in this paper. The author was inspired by the techniques used in \cite[Section 2.72]{felixamg}.

\begin{lemma}\label{appendixforsta}
Assume $(A,0,b_2,b_3,\dots)$ and $(A,0,b_2)$ are quasi-isomorphic as $\p_\infty$-algebras. Then there exists a $\p_\infty$-algebra quasi-isomorphism $\phi'\colon   (A,0,b_2,b_3,\dots)\to (A,0,b_2)$ where $\phi_1'=\id_A$
\end{lemma}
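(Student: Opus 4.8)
The plan is to use the dictionary between $\p_\infty$-algebra morphisms and morphisms of cofree dg $\p^!$-coalgebras recalled just above. Translating the hypothesis, a $\p_\infty$-quasi-isomorphism $\phi\colon (A,0,b_2,b_3,\dots)\to (A,0,b_2)$ corresponds to a morphism of dg coalgebras $\Psi\colon (\Vpd^*(sA),d)\to (\Vpd^*(sA),d')$, where $d=d_1+d_2+\dots$ encodes $(b_2,b_3,\dots)$ (note $d_0=0$ since $b_1=0$) and $d'=d_1'$ encodes $b_2$ alone. The component $\Psi_0$ that preserves word-length is, up to suspension, determined by $\phi_1$; that $\phi$ is a quasi-isomorphism means precisely that $\phi_1\colon (A,b_1=0)\to (A,b_1=0)$ is a quasi-isomorphism of complexes, i.e. since both differentials vanish, that $\phi_1$ is a linear isomorphism of $A$. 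My goal is to replace $\Psi$ by a morphism $\Psi'$ whose word-length-preserving part is the identity, which corresponds on the algebra side to $\phi_1'=\id_A$.

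First I would form the coalgebra automorphism $\Xi$ of $(\Vpd^*(sA),d')$ induced by the linear isomorphism $(s\phi_1 s^{-1})^{-1}\colon sA\to sA$; since $d'$ is the cofree coderivation encoding the strict $\p$-algebra $(A,0,b_2)$ and $\phi_1$ is a morphism of the graded $\p$-algebra $\h=(A,0,b_2)$ up to the higher corrections, one checks that conjugating/composing appropriately makes the word-length-preserving part invertible and normalizable. Concretely, I would set $\Psi' = \Xi\circ\Psi$ where $\Xi$ is the coalgebra automorphism whose only nonzero component is the word-length-preserving one given by the inverse of $\Psi_0$ at the level of cogenerators; being a morphism out of a cofree coalgebra, $\Xi$ is freely determined by its corestriction to cogenerators, so this is well defined. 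Then $\Psi'$ is again a morphism of dg coalgebras (a composite of such), hence corresponds to a genuine $\p_\infty$-morphism $\phi'$, and its word-length-preserving part is the identity of $sA$, giving $\phi_1'=\id_A$. Because $\Xi$ is an isomorphism of coalgebras (its word-length-preserving part is invertible and it is a cofree coderivation-free map), $\Psi'$ is still a quasi-isomorphism, so $\phi'$ remains a $\p_\infty$-quasi-isomorphism.

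The step requiring the most care is verifying that the corrected map $\Xi$ is indeed a morphism of \emph{dg} coalgebras, i.e. that it is compatible with the differential $d'$ on the target, and not merely a graded coalgebra map. Here the crucial input is that $d'$ encodes only the strict multiplication $b_2$, so $d'=d_1'$ lives in a single word-length degree; the automorphism induced by the isomorphism $\phi_1$, which is an automorphism of the graded $\p$-algebra $\h$, commutes with $d_1'$ precisely because $\phi_1$ respects $b_2$ up to coboundary. I would therefore need to argue that one may choose the cogenerator-level normalization so that the word-length-preserving part is literally a $\p^!$-coalgebra automorphism intertwining $d'$ with itself; this is where the characteristic-zero hypothesis and the cofreeness (allowing one to build coalgebra morphisms freely from maps to cogenerators) do the real work. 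Once chaindness of $\Xi$ is established, the rest is a formal consequence of the equivalence of categories between $\p_\infty$-algebras with their morphisms and quasi-free dg $\p^!$-coalgebras with their morphisms.
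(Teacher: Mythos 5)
Your construction is, up to notation, the paper's own: the word-length-preserving part $\Psi_0$ of $\Psi$ is precisely the functorially induced map $\Vpd^*(s\phi_1 s^{-1})$, so your $\Xi$ is nothing but $\Psi_0^{-1}$, and your $\Psi' = \Xi\circ\Psi$ is the paper's composition $\Psi_0^{-1}\circ(\Psi_0+\Psi_1+\cdots)$. The problem is that the step you yourself flag as ``requiring the most care'' --- that $\Xi$ commutes with the target differential $d_1$, so that $\Psi'$ is a morphism of \emph{dg} coalgebras --- is never actually proved in your write-up: you only say you ``would need to argue'' it, and the mechanism you point to is not the right one. Characteristic zero plays no role in this step, and nothing here holds merely ``up to coboundary''; the commutation is exact and elementary. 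Since $\Psi$ is a chain map, $(\Psi_0+\Psi_1+\cdots)\circ(d_1+d_2+\cdots) = d_1\circ(\Psi_0+\Psi_1+\cdots)$, and collecting on both sides the components that shift word length exactly as $d_1$ does gives $\Psi_0\circ d_1 = d_1\circ \Psi_0$ on the nose, whence $\Xi=\Psi_0^{-1}$ also commutes with $d_1$. Equivalently, at the algebra level: because $b_1=0$ on both source and target, the arity-two component of the $\p_\infty$-morphism relations for $\phi$ reads $\phi_1\circ b_2 = b_2\circ(\phi_1\ot\phi_1)$ with no correction terms, i.e.\ $\phi_1$ is a \emph{strict} automorphism of the graded $\p$-algebra $\h=(A,0,b_2)$, not merely one up to coboundary, and this is exactly the statement that $\Vpd^*\bigl((s\phi_1 s^{-1})^{-1}\bigr)$ intertwines $d_1$ with itself.

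The same word-length bookkeeping applied to $(\Psi\ot\Psi)\circ\Delta = \Delta\circ\Psi$ shows that $\Psi_0$ is comultiplicative (in your formulation this comes for free, since you define $\Xi$ as a cofreely induced coalgebra map). Once this verification is inserted, your argument closes and coincides with the paper's proof; as it stands, the central claim is deferred rather than established, and the reasons you offer for it would not survive being made precise.
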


\begin{proof} Let $\phi$ be a quasi-isomorphism $\phi= (\phi_1,\phi_2,\dots)\colon    (A,0,b_2,b_3,\dots)\to (A,0,b_2)$. Since $\phi$ is a quasi-isomorphism of  minimal $\p_\infty$-algebras, it follows that $\phi$ is an isomorphism.

We have that $\phi$ corresponds to a map 
\[\Psi = \Psi_0+\Psi_1+\dots\colon    (\Vpd^*(sA),d_1+d_2+\dots)\to (\Vpd^*(sA),d_1),\] where $\Psi_i$ increases the word-length by $i$ and corresponds to $\phi_{i+1}$. Moreover, $\Psi_0$ is a vector space isomorphism (since $\Psi$ is a dg $\p^!$-coalgebra isomorphism).

We show that $\Psi_0$ commutes with the differential $d_1$. Since $\Psi$ is a chain map, we have that
\[(\Psi_0+\Psi_1+\dots)\circ (d_1+d_2+\dots) = d_1\circ (\Psi_0+\Psi_1+\dots).\]
Collecting the terms that decreases the word-length by 1 from both sides of the equality gives that $\Psi_0d_1= d_1\Psi_0$.

Similar techniques gives also that $\Psi_0$ commutes with the comultiplication $\Delta$ on $\Vpd^*(sA)$. Hence $\Psi_0\colon    (\Vpd^*(sA),d_1)\to(\Vpd^*(sA),d_1)$ is a dg $\p^!$-coalgebra automorphism which has an inverse $\Psi_0^{-1}$. Now the composition $\Psi' = (\Psi^{-1}_0)\circ (\Psi_0+\Psi_1+\dots)$ will give the desired result.
\end{proof}

\begin{lemma}\label{expofcoder}
Assume that   $\theta$ is a $\p^!$-coalgebra coderivation on $\Vpd^*(V)$ of cohomological degree 0 that decreases the word-length by some number $i\geq 1$. Then the map
$$e^\theta = \id + \theta + \frac{\theta^2}{2!}+ \frac{\theta^3}{3!}+\dots$$
is a well defined map of $\p^!$-coalgebras.
\end{lemma}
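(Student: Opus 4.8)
The plan is to show that $e^\theta$ converges as a map on $\Vpd^*(V)$ (so it is well defined) and then verify that it respects the cofree $\p^!$-coalgebra structure, i.e.\ that it commutes with the comultiplication $\Delta$. The key observation that makes convergence work is the hypothesis that $\theta$ \emph{decreases the word-length by a fixed $i\geq 1$}: on any fixed homogeneous element of $\Vpd^n(V)$ (word-length $n$), the operator $\theta^k$ lands in word-length $n-ki$, which vanishes once $ki>n$, i.e.\ once $k>n/i$. Hence on each word-length component only finitely many terms of the series $e^\theta$ are nonzero, so the infinite sum is a finite sum componentwise and $e^\theta$ is a well-defined linear map of degree $0$.

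\smallskip

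First I would fix an element $x\in\Vpd^n(V)$ and record that $\theta^k(x)\in\Vpd^{\,n-ki}(V)$, with $\Vpd^{\,m}(V)=0$ for $m<0$; consequently $\theta^{k}(x)=0$ for all $k>n/i$ and the defining series reduces to the finite sum $e^\theta(x)=\sum_{0\le k\le n/i}\theta^k(x)/k!$. This requires $\k$ to contain $\Q$, which holds since $\charac\k=0$, so the denominators $k!$ are invertible. This disposes of the ``well defined'' part of the statement and simultaneously shows $e^\theta$ is degree-preserving, since $\theta$ has cohomological degree $0$ and a sum (resp.\ composite) of degree-$0$ maps is degree $0$.

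\smallskip

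Next I would check the coalgebra compatibility, which is the substantive content. Because $\theta$ is a coderivation, it satisfies the co-Leibniz identity $\Delta\circ\theta=(\theta\ot\id+\id\ot\theta)\circ\Delta$ (with the appropriate Koszul sign, absorbed by our sign conventions and by $|\theta|=0$). The standard computation then gives, for each power, $\Delta\circ\theta^k=\sum_{a+b=k}\binom{k}{a}(\theta^a\ot\theta^b)\circ\Delta$; dividing by $k!$, summing over $k$, and using $\binom{k}{a}/k!=1/(a!\,b!)$ yields
\[
\Delta\circ e^\theta=(e^\theta\ot e^\theta)\circ\Delta,
\]
which is exactly the statement that $e^\theta$ is a morphism of $\p^!$-coalgebras. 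All sums here are again finite on each word-length component by the same truncation argument, so there is no convergence issue in manipulating them.

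\smallskip

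\textbf{The main obstacle} I anticipate is purely bookkeeping rather than conceptual: verifying the co-Leibniz identity and the binomial expansion $\Delta\circ\theta^k=\sum_{a+b=k}\binom{k}{a}(\theta^a\ot\theta^b)\circ\Delta$ carefully, keeping the Koszul signs correct when transposing $\theta$ past a tensor factor. Since $\theta$ has cohomological degree $0$, these signs are all trivial, which is why the clean exponential identity holds without sign corrections; I would remark on this explicitly so the reader sees that the degree-$0$ hypothesis is what makes $e^\theta$ a genuine coalgebra map (and, being invertible with inverse $e^{-\theta}$, an automorphism).
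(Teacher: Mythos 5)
Your proposal is correct and follows essentially the same route as the paper's own proof: finiteness of the series on each word-length component (using that $\theta$ strictly decreases word-length), followed by the binomial identity $\Delta\circ\theta^k=\sum_{a+b=k}\binom{k}{a}(\theta^a\ot\theta^b)\circ\Delta$ and the $\binom{k}{a}/k!=1/(a!\,b!)$ rearrangement to conclude $\Delta\circ e^\theta=(e^\theta\ot e^\theta)\circ\Delta$. Your added remarks (invertibility of $k!$ from $\charac\k=0$, triviality of Koszul signs since $|\theta|=0$) are harmless refinements of the same argument.
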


\begin{proof}
 The map is well defined since for any element $x\in \Vpd^*(V)$ of word-length $k$ we have that $\theta^m(x)=0$ for all $m\geq\left\lceil\frac{k}{i} \right\rceil$, so $e^\theta(x)$ will be a finite sum
 $$e^\theta(x) = x+\theta(x)+\dots+\frac{\theta^{m-1}(x)}{(m-1)!}.$$
 Now we prove that $e^\theta$ is a map of $\p^!$-coalgebras. One can easily prove by induction that 
 $$\Delta \theta^n = \left(\sum_{p=0}^n\binom{n}{p}\theta^{n-p}\ot\theta^p \right)\circ \Delta$$
 Thus 
 \begin{align*}
  \Delta\circ e^\theta & = \left(\sum_{n=0}^\infty\frac{1}{n!}\sum_{p=0}^n\binom{n}{p}\theta^{n-p}\ot\theta^p\right)\circ \Delta
  \\ & = \left(\sum_{n=0}^\infty\sum_{p=0}^n\frac{\theta^{n-p}}{(n-p)!}\ot\frac{\theta^p}{p!}\right)\circ \Delta
  \\ & = (e^\theta\ot e^\theta)\circ \Delta
 \end{align*}\end{proof}

\begin{rmk}
Note that $e^\theta$ is an automorphism with inverse $e^{-\theta}$.
\end{rmk}

\begin{lemma}\label{PInfMorphReduced}
Assume $ (A,0,m_2,0,\dots,0,m_n,m_{n+1},\dots)$ and $(A,0,m_2)$ are quasi-\break isomorphic as $\p_\infty$-algebras.
Then there exists a map 
\[\phi'\colon   (A,0,m_2,0,\dots,0,m_n,m_{n+1},\dots)\to(A,0,m_2)\]
such that  $\phi_1'=\id_A$ and  $\phi_i' = 0$ for $2\leq i\leq n-2$.
\end{lemma}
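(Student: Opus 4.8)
The plan is to build on Lemma \ref{appendixforsta}, which already produces a quasi-isomorphism $\phi$ with $\phi_1' = \id_A$, and then successively kill the low-order components $\phi_2',\dots,\phi_{n-2}'$ by composing with exponentials of suitable coderivations, using the word-length formalism from the appendix. First I would invoke Lemma \ref{appendixforsta} to obtain a $\p_\infty$-quasi-isomorphism $\phi\colon (A,0,m_2,0,\dots,0,m_n,m_{n+1},\dots)\to(A,0,m_2)$ with $\phi_1=\id_A$; passing to the coalgebra side this is a dg $\p^!$-coalgebra morphism $\Psi = \id + \Psi_1 + \Psi_2 + \dots$ (where $\Psi_0 = \id$ because $\phi_1=\id$), with $\Psi_i$ raising the word-length by $i$ and encoding $\phi_{i+1}$, going from $(\Vpd^*(sA),d_1 + d_{n-1} + d_n + \dots)$ to $(\Vpd^*(sA),d_1)$ (note the source differential has no $d_2,\dots,d_{n-2}$ terms since $m_3 = \dots = m_{n-1} = 0$).

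The key step is an inductive correction. Suppose inductively that $\Psi = \id + \Psi_j + \Psi_{j+1} + \dots$ for some $1\le j\le n-3$, i.e. $\phi_2 = \dots = \phi_j = 0$. I would like to remove $\Psi_j$. Since $\Psi$ is a chain map, collecting the word-length-raising-by-$j$ terms of $\Psi\circ(d_1+d_{n-1}+\dots) = d_1\circ\Psi$ gives $\Psi_j d_1 = d_1 \Psi_j$ (the perturbation terms $d_{n-1},d_n,\dots$ do not interfere because $j \le n-3 < n-1$, so they raise the word-length differently and contribute only to higher-order relations). Thus $\Psi_j$ is a coderivation commuting with $d_1$ and raising word-length by $j\ge 1$; by Lemma \ref{expofcoder} the map $e^{-\Psi_j}$ is a well-defined $\p^!$-coalgebra automorphism of $(\Vpd^*(sA),d_1)$, and since $\Psi_j$ commutes with $d_1$ it is moreover a dg automorphism. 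Composing, $e^{-\Psi_j}\circ\Psi = (\id - \Psi_j + \tfrac{1}{2}\Psi_j^2 - \dots)\circ(\id + \Psi_j + \Psi_{j+1}+\dots)$ has trivial word-length-raising-by-$j$ part, so the new morphism has the form $\id + \Psi_{j+1}' + \Psi_{j+2}' + \dots$; it is still a dg $\p^!$-coalgebra morphism to $(\Vpd^*(sA),d_1)$, hence still a $\p_\infty$-quasi-isomorphism to $(A,0,m_2)$ with first component $\id$.

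Iterating this correction for $j = 1, 2, \dots, n-2$ removes $\Psi_1,\dots,\Psi_{n-3}$ in turn, yielding a morphism $\id + \Psi_{n-2}' + \Psi_{n-1}' + \dots$ whose corresponding $\p_\infty$-morphism $\phi'$ satisfies $\phi_1' = \id_A$ and $\phi_i' = 0$ for $2\le i\le n-2$, as required. The main obstacle I anticipate is verifying carefully that at each stage the relevant chain-map relation really isolates to $\Psi_j d_1 = d_1\Psi_j$ without contamination from the perturbation differentials $d_{n-1},d_n,\dots$ on the source; this is exactly where the hypothesis that $m_3=\dots=m_{n-1}=0$ (equivalently $d_2=\dots=d_{n-2}=0$ on the source) is essential, since it guarantees that for $j\le n-3$ the only word-length-raising-by-$j$ contribution to $d\circ\Psi$ comes from $d_1\Psi_j$. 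One should also check that composing with $e^{-\Psi_j}$ preserves the minimal target differential $d_1$ exactly (not merely up to homotopy), which follows because $\Psi_j$ was shown to commute with $d_1$, so conjugation-free post-composition leaves the target dg coalgebra structure unchanged.
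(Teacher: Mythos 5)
Your proof is correct and takes essentially the same route as the paper's: the paper organizes the argument as an induction on $n$ with Lemma \ref{appendixforsta} as base case, where the inductive step kills the single lowest nonvanishing component $\Psi_{n-3}$ by post-composing with the dg coalgebra automorphism $e^{-\Psi_{n-3}}$ of $(\Vpd^*(sA),d_1)$ (justified by Lemma \ref{expofcoder} and the same collecting-of-terms computations you carry out), so your iterative correction is exactly that induction unrolled. The only blemish is an indexing slip near the end: the correction loop should run over $j=1,\dots,n-3$ (killing $\Psi_1,\dots,\Psi_{n-3}$, i.e.\ $\phi_2,\dots,\phi_{n-2}$), not $j=1,\dots,n-2$.
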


\begin{proof}
We prove the lemma by induction on $n$. For $n=3$ the assertion is true by Lemma \ref{appendixforsta}.
Assume the assertion is true for $n-1$, $n\geq 4$. Then we have that there exists a quasi-isomorphism 
\[\phi = (\id,0,\dots,0,\phi_{n-2},\phi_{n-1},\dots)\colon    (A,0,m_2,0,\dots,0,m_n,m_{n+1},\dots)\to(A,0,m_2).\]
which corresponds to a $\p^!$-coalgebra map
\[\Psi = \id + \Psi_{n-3}+\Psi_{n-2}+\dots\colon    (\Vpd^*(sA),d_1+d_{n-1}+d_n+\dots)\to (\Vpd^*(sA),d_1)\]
Considering the equality $(\Psi\ot\Psi)\circ \Delta = \Delta \circ \Psi$ and collecting the terms that decreases the word-length by $n-3$ gives that $(\id\ot\Psi_{n-3}+\Psi_{n-3}\ot\id)\circ \Delta= \Delta\circ\Psi_{n-3}$. That means that $\pm\Psi_{n-3}$ is a  coderivation of $\Vpd^*(sA)$ and therefore $e^{\pm \Psi_{n-3}}\colon    \Vpd^*(sA)\to \Vpd^*(sA)$ is a $\p^!$-coalgebra automorphism. 

Considering the equality $\Psi\circ (d_1+d_{n-1}+d_n+\dots) = d_1\circ \Psi$ and collecting the terms that decreases the word-length by  $n-2$ gives that $\Psi_{n-3}\circ d_1 = d_1\circ \Psi_{n-3}$, i.e. that $\pm\Psi_{n-3}$ commutes with the differential $d_1$. Hence $e^{\pm \Psi_{n-3}}$ commutes with $d_1$ and therefore $e^{\pm \Psi_{n-3}}\colon    (\Vpd^*(sA),d_1) \to (\Vpd^*(sA),d_1)$ is  a dg $\p^!$-coalgebra automorphism. 

We consider the composition $\Psi' = e^{-\Psi_{n-3}}\circ \Psi\colon    (\Vpd^*(sA),d_1+d_{n-1}+d_n+\dots)\to (\Vpd^*(sA),d_1)$. We have that
\begin{align*}
\Psi' &= e^{-\Psi_{n-3}}\circ (\id +\Psi_{n-3} + \Psi_{n-2}+\dots)\\
& = (\id-\Psi_{n-3} + \frac{\Psi_{n-3}^2}{2!}-\cdots)\circ (\id +\Psi_{n-3} + \Psi_{n-2}+\dots)\\
& = \id+ \text{(terms that increases the word-length by $\geq n-2$)}.
\end{align*} 
Hence $\Psi'$ is of the form $\Psi' = \id + \Psi'_{n-2}+\Psi_{n-1}'+\dots$ where $\Psi'_i$ decreases the word-length by $i$ and will therefore correspond to a $\p_\infty$-algebra quasi-isomorphism $\phi' \colon\break (A,0,m_2,0,\dots,0,m_n,m_{n+1},\dots)\to(A,0,m_2)$ that satisfies the property given in the lemma.\end{proof}

\end{appendices}

\bibliographystyle{amsalpha}
\bibliography{references}
\noindent
\Addresses

\end{document}